\documentclass{amsart}[12pt]
\usepackage[utf8]{inputenc}
\usepackage{amsmath}
\usepackage{amsfonts}
\usepackage{amssymb}

\newtheorem{theorem}{Theorem}
\newtheorem{lemma}{Lemma}

\newtheorem{corollary}{Corollary}

\theoremstyle{definition}

\begin{document}
\title[Volterra type operators]%
{Volterra type operators on weighted Dirichlet spaces}

\author{Qingze Lin}

\address{School of Mathematics, Sun Yat-sen University, Guangzhou, Guangdong, 510275, P.~R.~China}\email{linqz@mail2.sysu.edu.cn}

\begin{abstract}
The Carleson measures for weighted Dirichlet spaces had been characterized by Girela and Pel\'{a}ez, who also characterized the boundedness of Volterra type operators between weighted Dirichlet spaces. However, their characterizations for the boundedness are not complete. In this paper, we completely characterize the boundedness and compactness of Volterra type operators from the weighted Dirichlet spaces $D_{\alpha}^p$ to $D_{\beta}^q$ ($-1<\alpha,\beta$ and $0<p<q<\infty$), which essentially complete their works. Furthermore, we investigate the order boundedness of Volterra type operators between weighted Dirichlet spaces.
\end{abstract}

\keywords{Volterra type operator, boundedness, compactness, weighted Dirichlet space, order boundedness} \subjclass[2010]{47G10, 31C25, 47B38}

\maketitle

\section{\bf Introduction}
Let $\mathbb{D}$ be the unit disk of a complex plane and let $H(\mathbb D)$ be the space consisting of all the analytic functions on $\mathbb{D}$. For $0<p<\infty,-1<\alpha,$ the weighted Bergman space $A_{\alpha}^p$ on the unit disk $\mathbb{D}$ is the space consisting of all the functions $f\in H(\mathbb D)$ such that
$$\|f\|_{A_{\alpha}^p}=\left(\int_{\mathbb{D}}|f(z)|^{p}(1-|z|^2)^\alpha dA(z)\right)^{1/p}<\infty\,,$$
where $dA(z)=\frac{1}{\pi}dxdy$ is the normalized Lebesgue area measure (see \cite{DS,HKZ,zhu} for references). Furthermore, the weighted Dirichlet space $D_{\alpha}^p$ on $\mathbb{D}$ is the space consisting of all the functions $f\in H(\mathbb D)$ satisfying
$$\|f\|_{D_{\alpha}^p}=\left(|f(0)|^p+\int_{\mathbb{D}}|f'(z)|^{p}(1-|z|^2)^\alpha dA(z)\right)^{1/p}<\infty\,.$$

For any fixed function $g\in H(\mathbb D)$, the Volterra type operator $T_g$ and its companion operator $S_g$ are defined, respectively, by
$$(T_gf)(z)=\int_0^z f(\omega)g'(\omega)d\omega\qquad \text{and}\qquad (S_gf)(z)=\int_0^z f'(\omega)g(\omega)d\omega$$
for any $f\in H(\mathbb D)\,.$

Let $|I|$ be the normalized Lebesgue length of $I$, which is an interval of $\partial \mathbb{D}$. The {\it Carleson square} $S(I)$ is defined by
$$S(I):=\{re^{i\theta}:\ e^{i\theta}\in I\,, 1-|I|\leq r<1\}\,.$$

For any $s>0$ and any positive Borel measure $\mu$ in $\mathbb{D}$, we say that $\mu$ is an $s$-Carleson measure if there is a positive constant $C$ such that
$$\mu(S(I))\leq C|I|^s\qquad \text{for all interval }I\subset \partial\mathbb{D}\,.$$

For a space $X$ of analytic functions on $\mathbb{D}$, it is often useful to know the integrability properties of the functions $f\in X$. That is to determine for which positive Borel measure $\mu$ on $\mathbb{D}$ there is a continuous inclusion $X\subset L^p(d\mu)$, or equivalently, by the closed graph theorem, there exists a positive constant $C$ such that for any $f\in X$\,,
$$\|f\|_{L^q(d\mu)}\leq C\|f\|_{X}\,.$$

Duren \cite{Duren} proved that the Hardy space $H^p\subset L^q(d\mu)\,, 0<p\leq q<\infty$, if and only if $\mu$ is a $q/p$-Carleson measure, which extends the result obtained by Carleson \cite{Car} where the case $p=q$ was proven.
For the weighted Bergman spaces, Luecking \cite{Luecking} proved that, for $0<p\leq q<\infty$ and $-1<\alpha$, $A_{\alpha}^p\subset L^q(d\mu)$ if and only if $\mu$ is a $\frac{q(\alpha+2)}{p}$-Carleson measure.

For $0<p<q<\infty$ and $-1<\alpha$, Girela and Pel\'{a}ez \cite{GP} gave the characterizations of the measures $\mu$ for which $D_{\alpha}^p\subset L^q(d\mu)$\,. Indeed, they proved the following theorem:

\begin{theorem}\label{th1}
Suppose that $0<p<q<\infty, -1<\alpha$ and $\mu$ is a positive Borel measure in $\mathbb{D}$, then\\
\phantom{(1)}(1) If $p<\alpha+2$, then $D_{\alpha}^p\subset L^q(d\mu)$ if and only if $\mu$ is a $\frac{q(\alpha+2-p)}{p}$-Carleson measure;\\
\phantom{(1)}(2) If $p=\alpha+2$, then $D_{\alpha}^p\subset L^q(d\mu)$ if and only if there exists a positive constant $C$ such that for all interval $I\subset\partial\mathbb{D}$, it holds that $\mu(S(I))\leq C\left(\log{\frac{1}{|I|}}\right)^{(1/p-1)q}$;\\
\phantom{(1)}(3) If $p>\alpha+2$, then $D_{\alpha}^p\subset L^q(d\mu)$ if and only if $\mu$ is a finite measure.
\end{theorem}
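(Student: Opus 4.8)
The guiding principle of my plan is that the trichotomy in the statement is dictated entirely by the growth of the point-evaluation functionals on $D_\alpha^p$. Since $f\in D_\alpha^p$ means precisely $f'\in A_\alpha^p$, the standard pointwise estimate for weighted Bergman spaces gives $|f'(z)|\lesssim \|f\|_{D_\alpha^p}(1-|z|^2)^{-(\alpha+2)/p}$, and integrating along the radius from $0$ to $z$ produces, according to whether the exponent $(\alpha+2)/p$ exceeds, equals, or is less than $1$, the three growth regimes
\[
|f(z)|\lesssim \|f\|_{D_\alpha^p}(1-|z|^2)^{-(\alpha+2-p)/p},\quad |f(z)|\lesssim \|f\|_{D_\alpha^p}\log\tfrac{e}{1-|z|^2},\quad |f(z)|\lesssim \|f\|_{D_\alpha^p},
\]
corresponding to $p<\alpha+2$, $p=\alpha+2$, $p>\alpha+2$ respectively. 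I would record these first, as they both motivate the three cases and supply one half of each equivalence.

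Part (3) is then immediate: the last estimate says $D_\alpha^p\hookrightarrow H^\infty$ boundedly, so if $\mu$ is finite then $\int_{\mathbb D}|f|^q\,d\mu\le \|f\|_\infty^q\mu(\mathbb D)\lesssim\|f\|_{D_\alpha^p}^q\mu(\mathbb D)$, while conversely testing the inclusion on the constant function $f\equiv1$ forces $\mu(\mathbb D)<\infty$. For the necessity in (1) and (2) I would use explicit normalized test functions concentrated near a boundary point $a\in\mathbb D$: the power functions $f_a(z)=(1-|a|^2)^{t}(1-\bar a z)^{-s}$ with $t=s+1-(\alpha+2)/p$ in case (1), and the logarithmic functions $f_a(z)=\big(\log\tfrac{e}{1-\bar a z}\big)^{1-1/p}$ in case (2). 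A Forelli--Rudin computation shows $\|f_a\|_{D_\alpha^p}\asymp1$, while on the Carleson box $S(I_a)$ with $|I_a|\asymp 1-|a|$ one has $|f_a|\gtrsim(1-|a|)^{-(\alpha+2-p)/p}$ (resp. $\gtrsim(\log\tfrac{1}{1-|a|})^{1-1/p}$); feeding these into $\int|f_a|^q\,d\mu\le C\|f_a\|_{D_\alpha^p}^q$ yields exactly the Carleson (resp. logarithmic) bound on $\mu(S(I_a))$.

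The substance of the proof is the sufficiency in (1) and (2). When $p<\alpha+1$ the weight $\alpha-p$ exceeds $-1$, and the classical derivative norm equivalence identifies $D_\alpha^p$ with the genuine Bergman space $A_{\alpha-p}^p$ with equivalent norms; Luecking's theorem (quoted above) then says the $\frac{q(\alpha-p+2)}{p}=\frac{q(\alpha+2-p)}{p}$-Carleson condition is exactly sufficient, finishing this subrange at once. For the remaining critical strip $\alpha+1\le p<\alpha+2$, and for the logarithmic case $p=\alpha+2$, no such identification is available, and I would argue directly via a Whitney tiling $\{T(I_j)\}$ of $\mathbb D$ on which $1-|z|$ and $(1-|z|^2)^\alpha$ are each essentially constant. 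On each tile the sub-mean-value property of $|f'|^p$ controls the oscillation of $f$ by $b_j^{1/p}|I_j|^{-(\alpha+2-p)/p}$, where $b_j=\int_{\widehat T(I_j)}|f'|^p(1-|w|^2)^\alpha\,dA$, and telescoping these oscillations along the chain of tiles joining $T(I_j)$ to the origin bounds $\sup_{T(I_j)}|f|$ by a weighted sum of the ancestors' energies $b_k^{1/p}$. Writing $\int|f|^q\,d\mu=\sum_j\int_{T(I_j)}|f|^q\,d\mu$, inserting the Carleson bound $\mu(T(I_j))\lesssim|I_j|^{q(\alpha+2-p)/p}$ (resp. the logarithmic bound), and exploiting $p<q$ through the inclusion $\ell^{1}\hookrightarrow\ell^{q/p}$, i.e. $\sum_j c_j^{q/p}\le(\sum_j c_j)^{q/p}$, together with a discrete Hardy inequality to absorb the telescoped weights, should collapse the whole sum into $(\sum_j b_j)^{q/p}\lesssim\|f\|_{D_\alpha^p}^q$.

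The main obstacle is exactly the non-locality of the point evaluation in these Dirichlet-type spaces. Unlike in a Bergman space, where $|f|^p$ is subharmonic and hence locally reproduced by its own average, here $|f(z)|$ depends on $f'$ along an entire path to $z$, so the crude global growth estimate already loses the endpoint exponent and cannot give the sharp Carleson condition. The delicate point is therefore to organize the telescoping so that the geometric accumulation of the weights $|I_j|^{-(\alpha+2-p)/p}$ (and the logarithmic accumulation when $p=\alpha+2$) is matched \emph{without loss} by the decay built into the Carleson hypothesis on $\mu$; making the resulting discrete Hardy inequality balance at the critical exponent is where essentially all the work lies, and it is also the step most sensitive to the distinction between the cases $p<\alpha+1$, $\alpha+1\le p<\alpha+2$, and $p=\alpha+2$.
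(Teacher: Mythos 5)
First, a point of comparison that matters for this review: the paper does not prove this statement at all. Theorem~1 is quoted from Girela and Pel\'aez \cite{GP} as known background, and the paper's own work begins only with Theorem~2. So your proposal can only be assessed on its own merits, not against an in-paper argument. On those merits, much of your outline is sound: part (3) in both directions, the necessity in part (1) via the normalized kernels $f_a$ (essentially the same family the paper uses in its Theorem~3), and the sufficiency in part (1) in the subrange $p<\alpha+1$, where $D_\alpha^p=A_{\alpha-p}^p$ with equivalent norms and Luecking's theorem finishes at once. But there are two genuine gaps.

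The serious one is exactly where you say ``essentially all the work lies'': sufficiency for $\alpha+1\le p<\alpha+2$ and for $p=\alpha+2$. Your scheme --- bound $\sup_{T(I_j)}|f|$ by the telescoped chain sum $\sum_{k\preceq j}b_k^{1/p}|I_k|^{-c}$ with $c=(\alpha+2-p)/p$, insert the Carleson bound $\mu(T(I_j))\lesssim|I_j|^{qc}$ tile by tile, apply $\sum_j c_j^{q/p}\le\bigl(\sum_j c_j\bigr)^{q/p}$, then a discrete Hardy inequality --- reduces matters to
\begin{equation*}
\sum_j|I_j|^{pc}\Bigl(\sum_{k\preceq j}b_k^{1/p}|I_k|^{-c}\Bigr)^{p}\ \lesssim\ \sum_k b_k\,,
\end{equation*}
and this inequality is \emph{false} precisely in the range where you need it. Take $b_{k_0}=1$ for a single tile and $b_k=0$ otherwise: the left side is at least $\sum_{j\succeq k_0}|I_j|^{pc}|I_{k_0}|^{-pc}=\sum_{m\ge0}2^{m}\cdot2^{-mpc}$, since there are $2^m$ descendants at relative depth $m$, and this diverges exactly when $pc=\alpha+2-p\le1$, i.e. $p\ge\alpha+1$. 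Working at exponent $q$ instead of first passing to $\ell^1$ only shrinks the bad range to $qc\le1$, and in the logarithmic case $p=\alpha+2$ tile-by-tile insertion is hopeless for any exponent: already for $f\equiv1$ it gives $\sum_{\mathrm{depth}\,m}\mu(T(I_j))\lesssim 2^m(m\log2)^{-(1-1/p)q}\to\infty$, even though trivially $\sum_j\mu(T(I_j))=\mu(\mathbb D)<\infty$. What your reduction throws away is the Carleson hypothesis on full boxes, $\sum_{j\succeq k}\mu(T(I_j))=\mu(S(I_k))\lesssim|I_k|^{qc}$, which is far stronger than the sum of the individual tile bounds; any correct proof (Luecking's, and the one in \cite{GP}) retains it via a stopping-time or maximal-function argument rather than a chain-by-chain Hardy inequality. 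So the critical strip --- the only part of the theorem not already contained in the Bergman-space literature --- remains unproven.

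The smaller, fixable error is in the necessity of part (2): your test functions $f_a(z)=\bigl(\log\frac{e}{1-\bar az}\bigr)^{1-1/p}$ do not have uniformly bounded norm. Since $p=\alpha+2$ makes the Forelli--Rudin integral critical, one gets
\begin{equation*}
\|f_a\|_{D_{p-2}^p}^p\ \asymp\ \int_{\mathbb D}\frac{(1-|z|^2)^{p-2}}{|1-\bar az|^{p}\,\log\frac{e}{|1-\bar az|}}\,dA(z)\ \asymp\ \sum_{k=1}^{K}\frac1k\ \asymp\ \log\log\frac{1}{1-|a|}\,,\qquad K\asymp\log\frac{1}{1-|a|}\,,
\end{equation*}
which is unbounded, so your argument yields the condition in (2) only with a parasitic $\bigl(\log\log\frac1{|I|}\bigr)^{q/p}$ factor. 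The standard fix is $f_a(z)=\bigl(\log\frac{2}{1-|a|^2}\bigr)^{-1/p}\log\frac{2}{1-\bar az}$, which has norm $\asymp1$ and modulus $\asymp\bigl(\log\frac{1}{1-|a|}\bigr)^{1-1/p}$ on $S(I_a)$; this is the same family the paper itself deploys in the proof of its Theorem~11(2). Relatedly, your preliminary growth estimate $|f(z)|\lesssim\|f\|\log\frac{e}{1-|z|^2}$ for $p=\alpha+2$ is true but not sharp: the sharp rate is $\bigl(\log\frac{2}{1-|z|^2}\bigr)^{1-1/p}$ (the paper's Lemma~1(2)), and it is this sharp rate, not the naive one, that the Carleson condition in (2) encodes.
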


For the case of $p\geq q$, the corresponding characterizations were partly investigated in \cite{GGP,PZ,wu}, where several questions were still open.

In section~2, we completely characterize the boundedness of Volterra type operators $T_g$ and $S_g$ from the weighted Dirichlet spaces $D_{\alpha}^p$ to $D_{\beta}^q$ ($-1<\alpha,\beta$ and $0<p<q<\infty$), which extend the works by Girela and Pel\'{a}ez in \cite{GP}, where the original characterizations only covered the case $\alpha<p<\alpha+2$\,. In section~3, we investigate the compactness of the Volterra type operators $T_g$ and $S_g$ from $D_{\alpha}^p$ to $D_{\beta}^q$ ($-1<\alpha,\beta$ and $0<p<q<\infty$). Finally, in section~4, we investigate the order boundedness of Volterra type operators between weighted Dirichlet spaces. Throughout the paper, $C$ will represent a positive constant which may be different at different occurrences.

\section{\bf Boundedness of Volterra type operators}
The Volterra type operator $T_g$ was introduced by Pommerenke \cite{P} to study the exponentials of BMOA functions and in the meantime, he proved that $T_g$ acting on the Hardy-Hilbert space $H^2$ is bounded if and only if $g\in BMOA$\,. After his work, Aleman, Siskakis and Cima \cite{AC,AS} studied the boundedness and compactness of $T_g$ on the Hardy space $H^p$, where they showed that $T_g$ is bounded (compact) on $H^p,\ 0<p<\infty$, if and only if $g\in BMOA\ (g\in VMOA)$. For the related works, see \cite{LMN}. Furthermore, Aleman and Siskakis \cite{AS1} studied the boundedness and compactness of $T_g$ on the Bergman spaces while Galanopoulos et al. \cite{GGP,GP} investigated the boundedness of $T_g$ and $S_g$ on the Dirichlet type spaces, and Xiao \cite{XJ} studied the Volterra type operators on $Q_p$ spaces through the characterizations of the Carleson measures. It should be noted that Li, Liu and Lou \cite{LLL} dealt with $T_g$ and $S_g$ operators whose range is the Morrey space and whose domain is either the Hardy space or the Morrey space.

Recently, Lin et al. \cite{LIN1,LIN,LIN2} characterized the boundedness and the strict singularities of the Volterra type operators acting on the (derivative) Hardy spaces and weighted Banach spaces with general weights. Li and Stevi\'{c} \cite{Li1,Li2} introduced the generalized composition operators (also called generalized Volterra type operators) acting on Zygmund spaces and Bloch type spaces and so forth, which had attracted intensive attentions. For instance, Mengestie \cite{TM} obtained a complete description of the boundedness and compactness of the product of the Volterra type operators and composition operators on the weighted Fock spaces, and recently, he \cite{TM1}  studied the topological structure of the space of Volterra-type integral operators on the Fock spaces endowed with the operator norm. Furthermore, by applying the Carleson embedding theorem and the Littlewood-Paley formula, Constantin and Pel\'{a}ez \cite{CP} obtained the boundedness and compactness of $T_g$ on the weighted Fock spaces and investigated the invariant subspaces of the classical Volterra operator $T_z$ on such spaces.

The multiplication operator $M_g$ is defined by
$$(M_gf)(z):=g(z)f(z)\,,\quad \text{for }f\in H(\mathbb{D}), z\in \mathbb{D}\,.$$
The following relation holds:
$$(M_gf)(z)=f(0)g(0)+(T_gf)(z)+(S_gf)(z)\,.$$

Then we characterize the boundedness of these operators.

\begin{theorem}\label{th2}
Let $-1<\alpha,\beta$, $g\in H(\mathbb D)$ and $0<p<q<\infty$\,. Define $d\mu_{g,q,\beta}(z):=(1-|z|^2)^\beta|g'(z)|^qdA(z)$. Then the following statements hold:\\
\phantom{(1)}(1) If $p<\alpha+2$, then $T_g:D_{\alpha}^p\rightarrow D_{\beta}^q$ is bounded if and only if $\mu_{g,q,\beta}(z)$ is a $\frac{q(\alpha+2-p)}{p}$-Carleson measure;\\
\phantom{(1)}(2) If $p=\alpha+2$, then $T_g:D_{\alpha}^p\rightarrow D_{\beta}^q$ is bounded if and only if there exists a positive constant $C$ such that for all interval $I\subset\partial\mathbb{D}$, it holds that $\mu_{g,q,\beta}(S(I))\leq C\left(\log{\frac{1}{|I|}}\right)^{(1/p-1)q}$;\\
\phantom{(1)}(3) If $p>\alpha+2$, then $T_g:D_{\alpha}^p\rightarrow D_{\beta}^q$ is bounded if and only if $\mu_{g,q,\beta}$ is a finite measure, or equivalently, $g\in D_{\beta}^q$\,.
\end{theorem}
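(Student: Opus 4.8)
The plan is to reduce the boundedness of $T_g$ to the Carleson embedding already recorded in Theorem~\ref{th1}, via the elementary observation that $T_g$ acts on the derivative in a completely transparent way. First I would differentiate the defining integral: for every $f\in H(\mathbb D)$ the fundamental theorem of calculus gives $(T_gf)(0)=0$ and $(T_gf)'(z)=f(z)g'(z)$. Feeding this into the definition of the $D_\beta^q$ norm, the point-evaluation term vanishes and
$$\|T_gf\|_{D_\beta^q}^q=\int_{\mathbb D}|(T_gf)'(z)|^q(1-|z|^2)^\beta\,dA(z)=\int_{\mathbb D}|f(z)|^q\,d\mu_{g,q,\beta}(z)=\|f\|_{L^q(d\mu_{g,q,\beta})}^q,$$
where the middle equality is just the definition of $\mu_{g,q,\beta}$.

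Since this identity holds for all $f$, the operator $T_g\colon D_\alpha^p\to D_\beta^q$ is bounded if and only if there is a constant $C$ such that $\|f\|_{L^q(d\mu_{g,q,\beta})}\le C\|f\|_{D_\alpha^p}$ for every $f\in D_\alpha^p$; by the closed graph argument recalled in the Introduction this is exactly the inclusion $D_\alpha^p\subset L^q(d\mu_{g,q,\beta})$. The three cases of the theorem now follow by applying the three corresponding cases of Theorem~\ref{th1} to $\mu=\mu_{g,q,\beta}$, reading off the $\frac{q(\alpha+2-p)}{p}$-Carleson condition when $p<\alpha+2$, the logarithmic growth condition on $\mu_{g,q,\beta}(S(I))$ when $p=\alpha+2$, and finiteness of $\mu_{g,q,\beta}$ when $p>\alpha+2$. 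For the supplementary equivalence in case (3), I would note that
$$\mu_{g,q,\beta}(\mathbb D)=\int_{\mathbb D}|g'(z)|^q(1-|z|^2)^\beta\,dA(z)=\|g\|_{D_\beta^q}^q-|g(0)|^q,$$
so $\mu_{g,q,\beta}$ is finite precisely when $g\in D_\beta^q$.

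I do not expect a genuine obstacle here: all of the analytic difficulty is absorbed into the embedding characterization of Theorem~\ref{th1}, and the remaining work is the lossless reduction above. The one point deserving care is checking that the differentiation identity produces the full $L^q(d\mu_{g,q,\beta})$ norm of $f$ with no residual lower-order terms — it is the vanishing of $(T_gf)(0)$ that makes the $D_\beta^q$ seminorm coincide exactly with the embedding norm, so that the characterization transfers without loss. This is precisely why $T_g$, unlike $S_g$ or $M_g$, admits such a clean description directly from Theorem~\ref{th1}.
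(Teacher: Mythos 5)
Your proposal is correct and is essentially the paper's own argument: the paper's proof is the one-line reduction ``this follows directly from Theorem~\ref{th1} and the closed graph theorem,'' and your write-up simply makes explicit the identity $(T_gf)(0)=0$, $(T_gf)'=fg'$, hence $\|T_gf\|_{D_\beta^q}=\|f\|_{L^q(d\mu_{g,q,\beta})}$, on which that reduction rests. The observation that $\mu_{g,q,\beta}(\mathbb{D})<\infty$ if and only if $g\in D_\beta^q$ likewise matches the equivalence stated in case (3).
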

\begin{proof}
This follows directly from Theorem~\ref{th1} and the closed graph theorem.
\end{proof}

\begin{theorem}\label{th3}
Let $-1<\alpha,\beta$, $g\in H(\mathbb D)$ and $0<p<q<\infty$\,. Then $S_g:D_{\alpha}^p\rightarrow D_{\beta}^q$ is bounded if and only if $|g(z)|=O\left((1-|z|^2)^{\frac{2+\alpha}{p}-\frac{2+\beta}{q}}\right)\,,$ as $|z|\rightarrow1^-$\,.
\end{theorem}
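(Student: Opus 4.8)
The plan is to reduce the boundedness of $S_g$ to a Carleson embedding for the weighted Bergman space, and then to convert the resulting Carleson condition into the stated pointwise growth estimate on $g$.

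First I would exploit the structure of $S_g$. Since $(S_gf)(z)=\int_0^z f'(\omega)g(\omega)\,d\omega$, we have $(S_gf)(0)=0$ and $(S_gf)'(z)=f'(z)g(z)$, so that
\[
\|S_gf\|_{D_\beta^q}^q=\int_{\mathbb D}|f'(z)|^q|g(z)|^q(1-|z|^2)^\beta\,dA(z)=\int_{\mathbb D}|f'(z)|^q\,d\mu_g(z),
\]
where $d\mu_g(z):=|g(z)|^q(1-|z|^2)^\beta\,dA(z)$. Because $S_gf$ depends only on $f'$, and the map $f\mapsto f'$ sends $D_\alpha^p$ onto $A_\alpha^p$ with $\|f'\|_{A_\alpha^p}\le\|f\|_{D_\alpha^p}$ (while every $\phi\in A_\alpha^p$ arises as $f'$ for some $f\in D_\alpha^p$ with $f(0)=0$ and $\|f\|_{D_\alpha^p}=\|\phi\|_{A_\alpha^p}$), the operator $S_g:D_\alpha^p\to D_\beta^q$ is bounded if and only if the inclusion $A_\alpha^p\subset L^q(d\mu_g)$ holds, i.e. $\|\phi\|_{L^q(d\mu_g)}\le C\|\phi\|_{A_\alpha^p}$ for all $\phi\in A_\alpha^p$.

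Next I would invoke Luecking's embedding theorem quoted in the introduction: for $0<p<q<\infty$ and $-1<\alpha$, one has $A_\alpha^p\subset L^q(d\mu_g)$ if and only if $\mu_g$ is a $\frac{q(\alpha+2)}{p}$-Carleson measure, that is,
\[
\int_{S(I)}|g(z)|^q(1-|z|^2)^\beta\,dA(z)\le C\,|I|^{\frac{q(\alpha+2)}{p}}\qquad\text{for every interval }I\subset\partial\mathbb D.
\]
The absence of a case split here (in contrast with Theorems~\ref{th1} and~\ref{th2}) is precisely because the test functions $f'$ range over all of $A_\alpha^p$ rather than only over derivatives of $D_\alpha^p$ functions.

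It remains to show that this Carleson condition is equivalent to $|g(z)|=O\big((1-|z|^2)^{\gamma}\big)$ with $\gamma=\frac{2+\alpha}{p}-\frac{2+\beta}{q}$, which I expect to be the main technical step. For sufficiency, the growth bound gives $|g(z)|^q(1-|z|^2)^\beta\le C(1-|z|^2)^{q\gamma+\beta}$, where the exponent satisfies $q\gamma+\beta+2=\frac{q(\alpha+2)}{p}$; integrating the elementary estimate $\int_{S(I)}(1-|z|^2)^{t}\,dA(z)\asymp|I|^{t+2}$ (valid for $t>-1$) then yields the Carleson bound. For necessity I would use that $|g|^q$ is subharmonic: fixing $a\in\mathbb D$ and applying the sub-mean-value inequality over the Euclidean disk $D\big(a,\tfrac12(1-|a|)\big)$, on which $(1-|z|^2)\asymp(1-|a|)$, gives
\[
|g(a)|^q\lesssim\frac{1}{(1-|a|)^2}\int_{D(a,\frac12(1-|a|))}|g(z)|^q\,dA(z)\lesssim\frac{1}{(1-|a|)^{2+\beta}}\,\mu_g\big(S(I_a)\big),
\]
where $I_a$ is an arc with $|I_a|\asymp(1-|a|)$ whose Carleson square contains this disk; the Carleson estimate $\mu_g(S(I_a))\lesssim(1-|a|)^{q(\alpha+2)/p}$ together with taking $q$-th roots then recovers the growth bound. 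The only points requiring care are the geometric comparison between Euclidean disks and Carleson squares and the bookkeeping of exponents, both of which are routine.
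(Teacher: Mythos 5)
Your proposal is correct, but it takes a genuinely different route from the paper's proof. The paper argues directly on the level of $D_\alpha^p$: for sufficiency it splits $|f'|^q=|f'|^p\,|f'|^{q-p}$ and absorbs the excess factor via the pointwise estimate $|f'(z)|\lesssim\|f\|_{D_\alpha^p}(1-|z|^2)^{-(\alpha+2)/p}$, reducing everything to $\int|f'|^p(1-|z|^2)^\alpha\,dA$; for necessity it tests $S_g$ on the explicit kernels $f_a(z)=(1-|a|^2)^{(\alpha+2)/p}(1-\bar az)^{1-2(\alpha+2)/p}$ and applies the sub-mean-value property of $|g|^q$ on pseudo-hyperbolic disks $\Delta(a,r)$, where $|f_a'(\omega)|\asymp|a|(1-|a|^2)^{-(\alpha+2)/p}$. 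You instead factor $S_g$ through the derivative map, which carries $D_\alpha^p$ onto $A_\alpha^p$ isometrically (for primitives vanishing at $0$), invoke Luecking's embedding theorem as a black box, and then show that for measures of the special form $d\mu_g=|g|^q(1-|z|^2)^\beta\,dA$ the $\frac{q(\alpha+2)}{p}$-Carleson condition is equivalent to the stated pointwise growth of $g$ (subharmonicity of $|g|^q$ in one direction, the estimate $\int_{S(I)}(1-|z|^2)^t\,dA\asymp|I|^{t+2}$ in the other). Your reduction is clean, and it correctly explains the structural point that no case split on $p$ versus $\alpha+2$ occurs here, in contrast with Theorems~\ref{th1} and~\ref{th2}: the test objects are all of $A_\alpha^p$, not just derivatives constrained by the Dirichlet norm. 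What the paper's argument buys is self-containedness --- it needs only the derivative point-evaluation bound and a mean-value inequality, not the full Carleson embedding theorem; what yours buys is modularity, since the same scheme immediately handles compactness (via vanishing Carleson measures) and other source/target spaces. Two points you labelled routine that deserve one line each in a final write-up: the exponent $t=q\gamma+\beta=\frac{q(\alpha+2)}{p}-2$ satisfies $t>-1$ (because $q>p$ and $\alpha+2>1$), so your elementary integral estimate does apply; and the containment of the Euclidean disk $D\bigl(a,\tfrac12(1-|a|)\bigr)$ in a Carleson square of sidelength comparable to $1-|a|$, which is the standard geometric fact making the subharmonicity step legitimate.
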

\begin{proof}
First, suppose that $|g(z)|=O\left((1-|z|^2)^{\frac{2+\alpha}{p}-\frac{2+\beta}{q}}\right)\,.$
If $f\in D_{\alpha}^p$, then $f'\in A_{\alpha}^p$ by definition. It is a well-known fact (see \cite{DS,zhu}) that if $h\in A_{\alpha}^p$, then for all $z\in\mathbb{D}$, we have
$$|h(z)|\leq C\frac{\|h\|_{A_{\alpha}^p}}{(1-|z|^2)^{(\alpha+2)/p}}\,.$$
Then it holds that
\begin{equation}\begin{split}\nonumber
\|S_gf\|_{D_{\beta}^q}&=\left(\int_{\mathbb{D}}|f'(z)g(z)|^q(1-|z|^2)^\beta dA(z)\right)^{1/q}\\
&\leq C \left(\int_{\mathbb{D}}|f'(z)|^p|f'(z)|^{q-p}(1-|z|^2)^{\frac{q(2+\alpha)}{p}-2} dA(z)\right)^{1/q}\\
&\leq C \left(\int_{\mathbb{D}}|f'(z)|^p\left(\frac{\|f\|_{D_{\alpha}^p}}{(1-|z|^2)^{\frac{(2+\alpha)}{p}}}\right)^{q-p}(1-|z|^2)^{\frac{q(2+\alpha)}{p}-2} dA(z)\right)^{1/q}\\
&\leq C\|f\|^{(q-p)/q}_{D_{\alpha}^p} \left(\int_{\mathbb{D}}|f'(z)|^p(1-|z|^2)^{\alpha} dA(z)\right)^{1/q}\\
&\leq C\|f\|_{D_{\alpha}^p}\,.
\end{split}\end{equation}
Hence, $S_g:D_{\alpha}^p\rightarrow D_{\beta}^q$ is bounded.

Conversely, suppose that $S_g:D_{\alpha}^p\rightarrow D_{\beta}^q$ is bounded. Given $a\in\mathbb{D}$, define the function $f_a$ by
$$f_a(z):=\frac{(1-|a|^2)^{(\alpha+2)/p}}{(1-\bar{a}z)^{2(\alpha+2)/p-1}}\,.$$
It is easy to prove that $f_a\in D_{\alpha}^p$ and there exists a positive constant $C$ such that for all $a\in\mathbb{D}$, $\|f_a\|_{D_{\alpha}^p}\leq C\,.$
Denoting $\Delta(a,r)$ as the pseudo-hyperbolic disk with center $a$ and radius $r$, we have
\begin{equation}\begin{split}\nonumber
(1-|a|^2)^{{2+\beta}-\frac{q(2+\alpha)}{p}}|g(a)|^q&\leq C(1-|a|^2)^{{\beta}-\frac{q(2+\alpha)}{p}}\int_{\Delta(a,r)}|g(\omega)|^qdA(\omega)\\
&\leq C|a|^{-q}\int_{\Delta(a,r)}|(S_gf_a)'(\omega)|^q(1-|\omega|^2)^{\beta}dA(\omega)\\
&\leq C|a|^{-q}\|S_gf_a\|^q_{D_{\beta}^q}\\
&\leq C|a|^{-q}\|S_g\|^q\|f_a\|^q_{D_{\alpha}^p}\\
&\leq C|a|^{-q}\,.
\end{split}\end{equation}
Thus, $|g(a)|=O\left((1-|a|^2)^{\frac{2+\alpha}{p}-\frac{2+\beta}{q}}\right)\,,$ as $|a|\rightarrow1^-$\,.
\end{proof}

As an immediate corollary, we obtain the known results originally proven by Zhao \cite{zhao}\,.
\begin{corollary}
Let $-1<\alpha,\beta$, $g\in H(\mathbb D)$ and $0<p<q<\infty$\,. Then $M_g:A_{\alpha}^p\rightarrow A_{\beta}^q$ is bounded if and only if $|g(z)|=O\left((1-|z|^2)^{\frac{2+\alpha}{p}-\frac{2+\beta}{q}}\right)\,,$ as $|z|\rightarrow1^-$\,.
\end{corollary}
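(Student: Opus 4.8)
The plan is to deduce the corollary from Theorem~\ref{th3} by setting up an isometric correspondence between the weighted Bergman space $A_{\alpha}^p$ and the subspace of $D_{\alpha}^p$ consisting of functions vanishing at the origin, a correspondence under which the multiplication operator $M_g$ is transformed into the companion operator $S_g$.

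First I would introduce the integration operator $J$ given by $(Jf)(z)=\int_0^z f(\omega)\,d\omega$. Directly from the two norm definitions, $Jf$ vanishes at $0$ and satisfies $(Jf)'=f$, so
$$\|Jf\|_{D_{\alpha}^p}^p=|(Jf)(0)|^p+\int_{\mathbb{D}}|(Jf)'(z)|^p(1-|z|^2)^\alpha\,dA(z)=\int_{\mathbb{D}}|f(z)|^p(1-|z|^2)^\alpha\,dA(z)=\|f\|_{A_{\alpha}^p}^p\,.$$
Thus $J$ is an isometry of $A_{\alpha}^p$ onto $\{F\in D_{\alpha}^p:F(0)=0\}$, with the analogous statement for the target indices $\beta,q$.

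Second, I would record the intertwining identity $S_gJ=JM_g$. Indeed, both $S_g(Jf)$ and $J(M_gf)$ vanish at the origin, and differentiating gives $(S_gJf)'=(Jf)'\,g=fg=M_gf=(JM_gf)'$; since an analytic function is determined by its value at $0$ together with its derivative, the two functions coincide. Combining this with the isometry yields $\|M_gf\|_{A_{\beta}^q}=\|JM_gf\|_{D_{\beta}^q}=\|S_g(Jf)\|_{D_{\beta}^q}$ for every $f\in A_{\alpha}^p$. Next I would observe that $S_g$ depends only on the derivative of its argument, so $S_gh=S_g(h-h(0))$ for every $h$; consequently the boundedness of $S_g$ on all of $D_{\alpha}^p$ is equivalent to its boundedness on the subspace $\{F(0)=0\}=J(A_{\alpha}^p)$. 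Since $J$ carries the unit ball of $A_{\alpha}^p$ onto the unit ball of that subspace, the preceding norm identity shows that $M_g:A_{\alpha}^p\rightarrow A_{\beta}^q$ is bounded if and only if $S_g:D_{\alpha}^p\rightarrow D_{\beta}^q$ is bounded, and the corollary then follows at once from the characterization in Theorem~\ref{th3}.

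I expect the only genuinely delicate point to be the passage between the full space $D_{\alpha}^p$ and its codimension-one subspace of functions vanishing at $0$: one must check that adjoining constants neither creates nor destroys boundedness, which is exactly where the identity $S_gh=S_g(h-h(0))$ and the norm comparison $\|h-h(0)\|_{D_{\alpha}^p}\le\|h\|_{D_{\alpha}^p}$ are used. Everything else is a routine verification of the isometry and of the intertwining relation $S_gJ=JM_g$.
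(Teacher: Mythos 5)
Your proposal is correct and takes essentially the same approach as the paper: the paper's entire proof is the one-line observation that $DS_g=M_gD$, where $D$ is the differentiation operator, and your intertwining identity $S_gJ=JM_g$ is exactly this relation rewritten in terms of the integration operator $J=D^{-1}$ (modulo constants). You have merely supplied the routine details — the isometry $\|Jf\|_{D_{\alpha}^p}=\|f\|_{A_{\alpha}^p}$ and the passage between $D_{\alpha}^p$ and its subspace of functions vanishing at $0$ — that the paper leaves implicit.
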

\begin{proof}
This follows immediately from the fact that $DS_g=M_gD$, where $D$ is the differential operator.
\end{proof}

\begin{theorem}\label{th4}
Let $-1<\alpha,\beta$, $g\in H(\mathbb D)$ and $0<p<q<\infty$\,. Define $d\mu_{g,q,\beta}(z):=(1-|z|^2)^\beta|g'(z)|^qdA(z)$. Then the following statements hold:\\
\phantom{(1)}(1) If $p<\alpha+2$, then $M_g:D_{\alpha}^p\rightarrow D_{\beta}^q$ is bounded if and only if $\mu_{g,q,\beta}(z)$ is a $\frac{q(\alpha+2-p)}{p}$-Carleson measure and $|g(z)|=O\left((1-|z|^2)^{\frac{2+\alpha}{p}-\frac{2+\beta}{q}}\right)\,,$ as $|z|\rightarrow1^-$;\\
\phantom{(1)}(2) If $p=\alpha+2$, then $M_g:D_{\alpha}^p\rightarrow D_{\beta}^q$ is bounded if and only if $|g(z)|=O\left((1-|z|^2)^{\frac{2+\alpha}{p}-\frac{2+\beta}{q}}\right)$ as $|z|\rightarrow1^-$ and there exists a positive constant $C$ such that for all interval $I\subset\partial\mathbb{D}$, it holds that $\mu_{g,q,\beta}(S(I))\leq C\left(\log{\frac{1}{|I|}}\right)^{(1/p-1)q}$;\\
\phantom{(1)}(3) If $p>\alpha+2$, then $M_g:D_{\alpha}^p\rightarrow D_{\beta}^q$ is bounded if and only if $|g(z)|=O\left((1-|z|^2)^{\frac{2+\alpha}{p}-\frac{2+\beta}{q}}\right)$ as $|z|\rightarrow1^-$ and $g\in D_{\beta}^q$\,.
\end{theorem}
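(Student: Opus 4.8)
The plan is to prove Theorem~\ref{th4} by exploiting the decomposition
$$
(M_g f)(z) = f(0)g(0) + (T_g f)(z) + (S_g f)(z),
$$
which reduces the boundedness of $M_g$ to the joint boundedness of $T_g$ and $S_g$. The key observation is that the point evaluation $f \mapsto f(0)g(0)$ is a bounded rank-one operator (since point evaluation is bounded on $D_\alpha^p$ by the usual Bergman-type pointwise estimates), so it contributes nothing to the boundedness question. Thus $M_g: D_\alpha^p \to D_\beta^q$ is bounded precisely when $T_g + S_g$ is bounded from $D_\alpha^p$ to $D_\beta^q$. Since Theorem~\ref{th2} characterizes boundedness of $T_g$ via the Carleson-measure conditions on $\mu_{g,q,\beta}$ (split into the three regimes $p<\alpha+2$, $p=\alpha+2$, $p>\alpha+2$) and Theorem~\ref{th3} characterizes boundedness of $S_g$ via the pointwise growth condition $|g(z)| = O\bigl((1-|z|^2)^{\frac{2+\alpha}{p}-\frac{2+\beta}{q}}\bigr)$, the sufficiency direction in each case is immediate: if both conditions hold, then $T_g$ and $S_g$ are each bounded, hence so is their sum, hence so is $M_g$.

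For the necessity direction I would argue that if $M_g$ is bounded, then both $T_g$ and $S_g$ must individually be bounded. First I would observe that $S_g$ is bounded from $M_g$ being bounded by applying $M_g$ to the standard test functions $f_a$ from the proof of Theorem~\ref{th3}, or more cleanly, by noting that the growth condition on $g$ is equivalent to the boundedness of the multiplier $g$ on the Bergman spaces (via the corollary $DS_g = M_g D$), and this growth condition is forced by testing $M_g$ on the same normalized reproducing-kernel-type functions. Concretely, applying the boundedness of $M_g$ to $f_a$ and extracting the pointwise growth of $g$ near the boundary yields the $O$-condition; this is essentially the same computation as in Theorem~\ref{th3}. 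Once the growth condition on $g$ is secured, $S_g$ is bounded by Theorem~\ref{th3}, and therefore $T_g = M_g - S_g - (\,\cdot\,)(0)g(0)$ is bounded as a difference of bounded operators, whence the Carleson-measure condition on $\mu_{g,q,\beta}$ follows from Theorem~\ref{th2}.

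I expect the main subtlety to lie in the necessity direction, specifically in decoupling the two conditions on $g$. The difficulty is that $M_g$ bounded does not a priori give $T_g$ and $S_g$ bounded separately, since cancellation between $T_g$ and $S_g$ could in principle occur. The clean way around this is to first extract the pointwise growth condition directly from $M_g$ acting on the test functions $f_a$ (this step uses only the size of $(M_g f_a)' = g' f_a + g f_a'$ and a subharmonicity/area estimate on a pseudohyperbolic disk, paralleling Theorem~\ref{th3}), which isolates the $S_g$-type condition, and only then to subtract off the now-bounded $S_g$ to recover boundedness of $T_g$. An alternative that avoids the cancellation issue entirely is to prove that the growth condition on $g$ together with the Carleson condition are each separately necessary by choosing test functions adapted to each regime; but the subtraction argument is more economical. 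The three cases $(1)$, $(2)$, $(3)$ then differ only through which version of the Carleson condition Theorem~\ref{th2} supplies, so the structure of the proof is uniform across them.
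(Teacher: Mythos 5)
Your overall architecture coincides with the paper's: sufficiency from the decomposition $(M_gf)(z)=f(0)g(0)+(T_gf)(z)+(S_gf)(z)$ together with Theorems~\ref{th2} and~\ref{th3}, and necessity by first extracting the growth condition on $g$ and only then subtracting the (now bounded) $S_g$ to conclude that $T_g$ is bounded and invoke Theorem~\ref{th2}. The gap is in the one step that carries all the weight: your extraction of the growth condition tests $M_g$ on the unmodified functions $f_a$ from Theorem~\ref{th3} and claims the computation is ``essentially the same computation as in Theorem~\ref{th3}.'' It is not. In Theorem~\ref{th3} the relevant derivative is $(S_gf_a)'=gf_a'$, so the subharmonicity/area estimate on $\Delta(a,r)$ directly bounds $|g(a)f_a'(a)|$. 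Here $(M_gf_a)'=g'f_a+gf_a'$, and the extra term $g'f_a$ is completely uncontrolled at this stage: you have no pointwise or integral information about $g'$ yet, since that is precisely what you intend to deduce afterwards. Because $f_a(a)=(1-|a|^2)^{1-(\alpha+2)/p}\neq0$, the mean-value estimate applied to $|(M_gf_a)'|^q$ only bounds $|g'(a)f_a(a)+g(a)f_a'(a)|$, from which $|g(a)f_a'(a)|$ cannot be recovered; this is exactly the cancellation phenomenon you yourself flagged at the operator level, reappearing verbatim at the pointwise level, and testing on $f_a$ does nothing to remove it.

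The missing idea --- which is the entire content of the paper's proof beyond the reduction you also give --- is to modify the test function so that it \emph{vanishes at the point being probed}: the paper takes
$$F_a(z):=\frac{(1-|a|^2)^{(\alpha+2)/p}}{(1-\bar{a}z)^{2(\alpha+2)/p-1}}-(1-|a|^2)^{(p-\alpha-2)/p}=f_a(z)-f_a(a)\,,$$
so that $F_a(a)=0$ and hence $(M_gF_a)'(a)=g'(a)F_a(a)+g(a)F_a'(a)=g(a)f_a'(a)$: the troublesome $g'$ term is annihilated precisely at the center of the pseudohyperbolic disk, which is the only point where the subharmonic-mean-value argument needs the value of the integrand. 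With this replacement, the computation from the converse part of Theorem~\ref{th3} does go through and isolates $|g(a)|\lesssim(1-|a|^2)^{\frac{2+\alpha}{p}-\frac{2+\beta}{q}}$. Your two proposed alternatives do not supply this ingredient: the route via $DS_g=M_gD$ and Bergman-space multipliers is circular (forcing the growth condition from boundedness of $M_g$ on the Dirichlet spaces is exactly what is at stake), and ``test functions adapted to each regime'' is the right instinct but is left as exactly the unspecified step. One caveat applying to both your sketch and the paper's: when $p<\alpha+2$ the subtracted constant $f_a(a)$ blows up as $|a|\to1^-$, so $\|F_a\|_{D_\alpha^p}$ is not uniformly bounded (the $|F_a(0)|$ term in the norm grows); a complete argument must account for this, e.g.\ by writing $M_gF_a=M_gf_a-f_a(a)g$ with $g=M_g1\in D_\beta^q$ and tracking the extra term, so this step deserves more detail than either account gives it.
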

\begin{proof}
Since $(M_gf)(z)=f(0)g(0)+(T_gf)(z)+(S_gf)(z)$\,, the sufficiency follows immediately from Theorem~\ref{th2} and Theorem~\ref{th3}. It remains to prove the necessity. In this case, it is obvious that if we can prove that $|g(z)|=O\left((1-|z|^2)^{\frac{2+\alpha}{p}-\frac{2+\beta}{q}}\right)$ as $|z|\rightarrow1^-$, then all the other statements follow immediately from Theorem~\ref{th2} and Theorem~\ref{th3} again.

Given $a\in\mathbb{D}$, define the function $F_a$ by
$$F_a(z):=\frac{(1-|a|^2)^{(\alpha+2)/p}}{(1-\bar{a}z)^{2(\alpha+2)/p-1}}-(1-|a|^2)^{(p-\alpha-2)/p}\,.$$
Then $F_a(a)=0$, and the remainder of the proof is essentially similar to the converse part of the proof in Theorem~\ref{th3}.
\end{proof}

\section{\bf Compactness of Volterra type operators}
For any $s>0$ and $\mu$ a positive Borel measure in $\mathbb{D}$, we say $\mu$ is a vanishing $s$-Carleson measure if
$$\mu(S(I))=o(|I|^s)\qquad \text{as }|I|\rightarrow0\,.$$

\begin{theorem}\label{th5}
Suppose that $0<p<q<\infty, -1<\alpha$ and $\mu$ is a positive Borel measure in $\mathbb{D}$, then\\
\phantom{(1)}(1) If $p<\alpha+2$, then $D_{\alpha}^p\subset L^q(d\mu)$ is compact if and only if $\mu$ is a vanishing $\frac{q(\alpha+2-p)}{p}$-Carleson measure;\\
\phantom{(1)}(2) If $p=\alpha+2$, then $D_{\alpha}^p\subset L^q(d\mu)$ is compact if and only if $\mu(S(I))= o\left(\left(\log{\frac{1}{|I|}}\right)^{(1/p-1)q}\right)$ as $|I|\rightarrow0$;\\
\phantom{(1)}(3) If $p>\alpha+2$, then $D_{\alpha}^p\subset L^q(d\mu)$ is compact if and only if $\mu$ is a finite measure.
\end{theorem}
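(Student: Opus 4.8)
The plan is to reduce compactness to a sequential vanishing criterion and then treat the three regimes in parallel with the boundedness characterization of Theorem~\ref{th1}, upgrading every ``$O$'' bound to the corresponding ``$o$''. First I would record the standard criterion: the inclusion $\iota\colon D_\alpha^p\to L^q(d\mu)$ is compact if and only if $\|f_n\|_{L^q(d\mu)}\to0$ for every sequence $\{f_n\}$ that is bounded in $D_\alpha^p$ and converges to $0$ uniformly on compact subsets of $\mathbb{D}$. This rests on the fact that norm-bounded subsets of $D_\alpha^p$ are normal families, which follows from the pointwise growth estimate $|f(z)|\lesssim\|f\|_{D_\alpha^p}(1-|z|^2)^{-(\alpha+2-p)/p}$ in case (1) (with a logarithmic factor when $p=\alpha+2$ and uniform boundedness when $p>\alpha+2$); relative compactness of the image is then equivalent to the stated sequential vanishing.

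For sufficiency in cases (1) and (2), I would assume the vanishing condition and take such a sequence $\{f_n\}$. A routine covering argument, together with the finiteness of $\mu$ forced by the Carleson bound, shows the vanishing condition is equivalent to $\|\mu|_{\{|z|>r\}}\|\to0$ in the relevant (resp.\ logarithmic) Carleson norm as $r\to1^-$. Fixing $\varepsilon>0$, choose $r_0$ so that this tail norm lies below $\varepsilon$, and split $\int_{\mathbb{D}}|f_n|^q\,d\mu$ over $\{|z|\le r_0\}$ and $\{|z|>r_0\}$. The first piece tends to $0$ as $n\to\infty$ because $f_n\to0$ uniformly on the compact set $\{|z|\le r_0\}$; the second is $\lesssim\varepsilon\sup_n\|f_n\|_{D_\alpha^p}^q$ by applying the boundedness part of Theorem~\ref{th1} to the tail measure. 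Letting $n\to\infty$ and then $\varepsilon\to0$ yields $\|f_n\|_{L^q(d\mu)}\to0$.

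For necessity in cases (1) and (2), I would feed into the criterion the normalized test family $f_a(z)=(1-|a|^2)^{(\alpha+2)/p}/(1-\bar a z)^{2(\alpha+2)/p-1}$ from the proof of Theorem~\ref{th3} (and its logarithmic analogue when $p=\alpha+2$): these are uniformly bounded in $D_\alpha^p$ and converge to $0$ uniformly on compacta as $|a|\to1^-$, so compactness forces $\|f_a\|_{L^q(d\mu)}\to0$. Choosing the interval $I_a$ centered at $a/|a|$ with $|I_a|\asymp1-|a|$, one has $|f_a(z)|\asymp(1-|a|)^{-(\alpha+2-p)/p}$ on $S(I_a)$, whence $\mu(S(I_a))\lesssim(1-|a|)^{q(\alpha+2-p)/p}\|f_a\|_{L^q(d\mu)}^q=o(|I_a|^{q(\alpha+2-p)/p})$, which is precisely the vanishing condition (and, after tracking the logarithmic exponents, its borderline form in case (2)).

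Case (3) is separate and cleanest: for $p>\alpha+2$ the inclusion factors as $D_\alpha^p\to C(\overline{\mathbb{D}})\to L^q(d\mu)$, where the first arrow is compact---functions in $D_\alpha^p$ extend Hölder-continuously to $\overline{\mathbb{D}}$, and Hölder balls are relatively compact in $C(\overline{\mathbb{D}})$ by Arzelà--Ascoli---and the second arrow is bounded precisely when $\mu(\mathbb{D})<\infty$; a compact operator composed with a bounded one is compact, giving sufficiency, while the constant function $1$ shows finiteness is necessary even for boundedness. The main obstacle I anticipate is the borderline regime $p=\alpha+2$: both the growth estimate underlying normality and the test functions carry logarithmic corrections, and matching the little-$o$ of $\mu(S(I))$ against $(\log\frac1{|I|})^{(1/p-1)q}$ to the uniform-on-compacta decay of the logarithmic test family requires careful bookkeeping of exponents rather than a new idea.
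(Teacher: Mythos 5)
Your proposal is correct, and it takes a genuinely different (more self-contained) route than the paper, whose proof of Theorem~\ref{th5} is essentially a set of citations: part (1) is quoted from \cite{Kumar}, part (2) is said to follow by adapting Theorem~3.1(ii) of \cite{PZ}, and part (3) is left as a ``standard argument'' based on $D_\alpha^p\subset H^\infty$. What you write out is the machinery behind those citations: the sequential criterion for compactness (legitimate because norm-bounded subsets of $D_\alpha^p$ are normal families, by the point-evaluation estimates recorded in Lemma~\ref{le1}); the small-interval/large-interval splitting showing that the vanishing condition is equivalent to decay of the Carleson norm of $\mu|_{\{|z|>r\}}$ as $r\to1^-$ (large intervals are handled simply by $\mu(\{|z|>r\})\to0$, which uses the finiteness of $\mu$ forced by the Carleson condition); the test functions $f_a$, with a logarithmic analogue in the borderline case, for necessity; and, for (3), the factorization $D_\alpha^p\to C(\overline{\mathbb{D}})\to L^q(d\mu)$ via the Hardy--Littlewood H\"older estimate and Arzel\`a--Ascoli, which is a clean realization of what the paper leaves unexplained. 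One step needs explicit justification: when you bound the tail integral by ``applying the boundedness part of Theorem~\ref{th1} to the tail measure'', you need the embedding constant to be dominated by the (tail) Carleson norm, uniformly in $r$. Theorem~\ref{th1} as stated is purely qualitative, so this is not a formal consequence of its statement; it follows either from its proof (the usual arguments give the embedding constant linear in the Carleson constant) or from an abstract summation trick (if measures $\mu_k$ of Carleson norm at most $1$ had embedding constants tending to infinity, then $\sum_k 2^{-k}\mu_k$ would be a Carleson measure contradicting Theorem~\ref{th1}). With that remark supplied, your outline compiles into a complete proof: the paper's version buys brevity, while yours buys a self-contained argument that treats all three regimes uniformly.
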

\begin{proof}
(1) is known (see, for example, \cite{Kumar}).

For (2), we noticed that this condition is, in deed, a vanishing $\left((1-1/p)q,0\right)$-logarithmic Carleson measure and the proof of it is basically similar to (ii) of Theorem~3.1 in \cite{PZ}\,.

Now for (3), since when $p>\alpha+2$, it holds that $D_{\alpha}^p\subset H^\infty$, where $H^\infty$ is the space of all the bounded analytic functions on $\mathbb{D}$, then the compactness follows easily by the standard arguments.
\end{proof}

Then we characterize the compactness of these operators.

\begin{theorem}\label{th6}
Let $-1<\alpha,\beta$, $g\in H(\mathbb D)$ and $0<p<q<\infty$\,. Define $d\mu_{g,q,\beta}(z):=(1-|z|^2)^\beta|g'(z)|^qdA(z)$. Then the following statements hold:\\
\phantom{(1)}(1) If $p<\alpha+2$, then $T_g:D_{\alpha}^p\rightarrow D_{\beta}^q$ is compact if and only if $\mu_{g,q,\beta}(z)$ is a vanishing $\frac{q(\alpha+2-p)}{p}$-Carleson measure;\\
\phantom{(1)}(2) If $p=\alpha+2$, then $T_g:D_{\alpha}^p\rightarrow D_{\beta}^q$ is compact if and only if $\mu_{g,q,\beta}(S(I))=o\left(\left(\log{\frac{1}{|I|}}\right)^{(1/p-1)q}\right)$ as $|I|\rightarrow0$;\\
\phantom{(1)}(3) If $p>\alpha+2$, then $T_g:D_{\alpha}^p\rightarrow D_{\beta}^q$ is compact if and only if $\mu_{g,q,\beta}$ is a finite measure, or equivalently, $g\in D_{\beta}^q$\,.
\end{theorem}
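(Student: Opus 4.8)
The plan is to reduce the compactness of $T_g$ to the compactness of a Carleson-type embedding and then invoke Theorem~\ref{th5}. The crucial observation is that, since $(T_gf)'(z)=f(z)g'(z)$ and $(T_gf)(0)=0$, one has for every $f\in H(\mathbb{D})$ the exact identity
\begin{equation*}
\|T_gf\|_{D_\beta^q}^q=\int_{\mathbb{D}}|f(z)g'(z)|^q(1-|z|^2)^\beta\,dA(z)=\int_{\mathbb{D}}|f(z)|^q\,d\mu_{g,q,\beta}(z)=\|f\|_{L^q(d\mu_{g,q,\beta})}^q.
\end{equation*}
Thus $T_g$ acts isometrically as the inclusion map $D_\alpha^p\hookrightarrow L^q(d\mu_{g,q,\beta})$, and it suffices to show that $T_g:D_\alpha^p\to D_\beta^q$ is compact if and only if this inclusion is compact.

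For that equivalence I would use the standard sequential criterion for operators of this type. Since the closed unit ball of $D_\alpha^p$ is a normal family, every bounded sequence in $D_\alpha^p$ has a subsequence converging uniformly on compact subsets of $\mathbb{D}$; consequently $T_g$ is compact if and only if $\|T_gf_n\|_{D_\beta^q}\to0$ whenever $\{f_n\}$ is bounded in $D_\alpha^p$ and $f_n\to0$ uniformly on compacta, and the same criterion characterizes compactness of the inclusion into $L^q(d\mu_{g,q,\beta})$. The displayed identity shows these two conditions are literally the same, so the two notions of compactness coincide.

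Having established this reduction, the three cases follow at once from the corresponding three parts of Theorem~\ref{th5} applied to $\mu=\mu_{g,q,\beta}$. In part~(3) I would additionally record that finiteness of $\mu_{g,q,\beta}$ is equivalent to $g\in D_\beta^q$, since $\mu_{g,q,\beta}(\mathbb{D})=\int_{\mathbb{D}}|g'(z)|^q(1-|z|^2)^\beta\,dA(z)=\|g\|_{D_\beta^q}^q-|g(0)|^q$, which makes the stated dichotomy transparent.

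I expect the only delicate point to be the careful justification of the sequential compactness criterion, particularly in the range $0<q<1$, where $D_\beta^q$ and $L^q(d\mu_{g,q,\beta})$ are complete metric (not normed) spaces. There one replaces the norms by the translation-invariant metrics $d(f,h)=\|f-h\|^q$, and the isometric identity above still transfers Cauchy sequences between the two spaces, so the argument proceeds unchanged. The remaining verifications — that point evaluations on $\mathbb{D}$ are controlled and that the uniform-on-compacta limit of an extracted subsequence again lies in the relevant space — are routine and will be treated briefly.
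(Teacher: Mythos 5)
Your proposal is correct and is essentially the paper's own argument: the paper's proof consists of the single line ``this follows directly from Theorem~\ref{th5},'' and the content it leaves implicit is exactly your identity $\|T_gf\|_{D_\beta^q}^q=\int_{\mathbb{D}}|f|^q\,d\mu_{g,q,\beta}$ (valid since $(T_gf)(0)=0$ and $(T_gf)'=fg'$), which identifies $T_g$ with the embedding $D_\alpha^p\hookrightarrow L^q(d\mu_{g,q,\beta})$ and transfers the compactness characterization verbatim. Your added care about the sequential compactness criterion and the range $0<q<1$ is a sound filling-in of details the paper takes for granted.
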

\begin{proof}
This follows directly from Theorem~\ref{th5}.
\end{proof}

\begin{theorem}\label{th7}
Let $-1<\alpha,\beta$, $g\in H(\mathbb D)$ and $0<p<q<\infty$\,. Then $S_g:D_{\alpha}^p\rightarrow D_{\beta}^q$ is compact if and only if $|g(z)|=o\left((1-|z|^2)^{\frac{2+\alpha}{p}-\frac{2+\beta}{q}}\right)\,,$ as $|z|\rightarrow1^-$\,.
\end{theorem}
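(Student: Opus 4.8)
The plan is to mirror the proof of Theorem~\ref{th3}, upgrading every $O$-estimate to the corresponding $o$-estimate, and to invoke the standard sequential characterization of compactness. Since point evaluations of the derivative are bounded on $D_\alpha^p$, bounded subsets form normal families, and a bounded operator $S_g:D_\alpha^p\to D_\beta^q$ is compact if and only if $\|S_gf_n\|_{D_\beta^q}\to0$ for every sequence $\{f_n\}$ that is bounded in $D_\alpha^p$ and converges to $0$ uniformly on compact subsets of $\mathbb{D}$. I would establish this criterion first and then treat the two implications.

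For the sufficiency, I would assume the $o$-condition and fix such a sequence $\{f_n\}$ with $\sup_n\|f_n\|_{D_\alpha^p}\le M$. Given $\varepsilon>0$, the hypothesis provides $r_0\in(0,1)$ with $|g(z)|\le\varepsilon(1-|z|^2)^{(2+\alpha)/p-(2+\beta)/q}$ for $r_0<|z|<1$. I would then split $\|S_gf_n\|_{D_\beta^q}^q$ into integrals over $\{|z|\le r_0\}$ and $\{|z|>r_0\}$. On the outer annulus, substituting this bound and repeating verbatim the chain of inequalities in Theorem~\ref{th3}, i.e. inserting the pointwise estimate $|f_n'(z)|\le C\|f_n\|_{D_\alpha^p}(1-|z|^2)^{-(2+\alpha)/p}$ into the factor $|f_n'|^{q-p}$, bounds that piece by $C\varepsilon^qM^q$. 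On the inner disk, $|g|$ and $(1-|z|^2)^\beta$ are bounded while $\sup_{|z|\le r_0}|f_n'(z)|\to0$ (since $f_n\to0$ on compacta forces $f_n'\to0$ on compacta), so that piece tends to $0$ as $n\to\infty$. Hence $\limsup_n\|S_gf_n\|_{D_\beta^q}^q\le C\varepsilon^qM^q$, and letting $\varepsilon\to0$ gives compactness.

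For the necessity, I would reuse the test functions $f_a(z)=(1-|a|^2)^{(\alpha+2)/p}(1-\bar az)^{1-2(\alpha+2)/p}$ from Theorem~\ref{th3}, which satisfy $\sup_a\|f_a\|_{D_\alpha^p}\le C$ and, because for $z$ in a fixed compact set $|1-\bar az|$ stays bounded below while $(1-|a|^2)^{(\alpha+2)/p}\to0$, converge to $0$ uniformly on compacta as $|a|\to1^-$. Compactness then forces $\|S_gf_a\|_{D_\beta^q}\to0$. Feeding this into the identical subharmonicity chain of Theorem~\ref{th3}, namely $(1-|a|^2)^{2+\beta-q(2+\alpha)/p}|g(a)|^q\le C|a|^{-q}\|S_gf_a\|_{D_\beta^q}^q$, and using $|a|^{-q}\to1$, yields $(1-|a|^2)^{2+\beta-q(2+\alpha)/p}|g(a)|^q\to0$, which is exactly the asserted estimate $|g(a)|=o\bigl((1-|a|^2)^{(2+\alpha)/p-(2+\beta)/q}\bigr)$.

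I expect the main obstacle to lie not in the two transcribed estimates but in rigorously justifying the sequential compactness lemma in the quasi-Banach regime $0<p<1$, where $D_\alpha^p$ and $D_\beta^q$ are complete metric rather than normed spaces; there one must verify that the normal-families extraction and the equivalence with testing against compactly convergent sequences survive. A secondary point requiring care in the sufficiency is that the splitting radius $r_0$ must be chosen independently of $n$, which is precisely why the uniform bound $M$ on the sequence is indispensable. Once these are settled, the remainder is a direct adaptation of Theorem~\ref{th3}.
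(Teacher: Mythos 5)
Your proposal is correct and takes essentially the same route as the paper: the sufficiency uses the same splitting at a radius $r_0$ together with the chain of inequalities from Theorem~\ref{th3} on the outer annulus, and the necessity uses the same test functions $f_a$ and the same subharmonicity estimate $(1-|a|^2)^{2+\beta-q(2+\alpha)/p}|g(a)|^q\lesssim |a|^{-q}\|S_gf_a\|_{D_\beta^q}^q$. The only difference is that you make explicit two points the paper leaves implicit — the sequential compactness criterion (including its validity for $0<p<1$) and the vanishing of the integral over the inner disk — which is a matter of detail, not of method.
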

\begin{proof}
First suppose that $|g(z)|=o\left((1-|z|^2)^{\frac{2+\alpha}{p}-\frac{2+\beta}{q}}\right)\,.$
Then, for any $\epsilon>0$, there exists $r$ with $0<r<1$ such that $\frac{|g(z)|}{\left((1-|z|^2)^{\frac{2+\alpha}{p}-\frac{2+\beta}{q}}\right)}<\epsilon\,,$ whenever $|z|>r$\,.
Now, for any bounded sequence $\{f_n\}^\infty_{n=0}\subset D_{\alpha}^p$ such that $f_n$ converges to 0 locally uniformly, it holds that
\begin{equation}\begin{split}\nonumber
&\phantom{=\;}\limsup_{n\rightarrow\infty}\|S_gf_n\|_{D_{\beta}^q}\\
&=\limsup_{n\rightarrow\infty}\left(\int_{\mathbb{D}}|f'_n(z)g(z)|^q(1-|z|^2)^\beta dA(z)\right)^{1/q}\\
&\leq \limsup_{n\rightarrow\infty}\left(\int_{\mathbb{D}\backslash r\overline{\mathbb{D}}}|f'_n(z)g(z)|^q(1-|z|^2)^\beta dA(z)\right)^{1/q}\\
&\leq\limsup_{n\rightarrow\infty} C\epsilon^{1/q} \left(\int_{\mathbb{D}}|f'_n(z)|^p|f'(z)|^{q-p}(1-|z|^2)^{\frac{q(2+\alpha)}{p}-2} dA(z)\right)^{1/q}\\
&\leq\limsup_{n\rightarrow\infty} C\epsilon^{1/q}\left(|f'_n(z)|^p\left(\frac{\|f_n\|_{D_{\alpha}^p}}{(1-|z|^2)^{\frac{(2+\alpha)}{p}}}\right)^{q-p}(1-|z|^2)^{\frac{q(2+\alpha)}{p}-2} dA(z)\right)^{1/q}\\
&\leq\limsup_{n\rightarrow\infty} C\epsilon^{1/q}\|f_n\|^{(q-p)/q}_{D_{\alpha}^p} \left(\int_{\mathbb{D}}|f'_n(z)|^p(1-|z|^2)^{\alpha} dA(z)\right)^{1/q}\\
&\leq\limsup_{n\rightarrow\infty} C\epsilon^{1/q}\|f_n\|_{D_{\alpha}^p}\\
&\leq C\epsilon^{1/q}\,.
\end{split}\end{equation}
Since $\epsilon$ is arbitrary, it follows that $S_g:D_{\alpha}^p\rightarrow D_{\beta}^q$ is compact.

Conversely, suppose that $S_g:D_{\alpha}^p\rightarrow D_{\beta}^q$ is compact. Choose the functions $f_a$ defined in the proof of Theorem~\ref{th3}, then the direct computation shows that $\|f_a\|_{D_{\alpha}^p}$ is uniformly bounded for all $a\in\mathbb{D}$  and $f_a$ converges to 0 locally uniformly in $\mathbb{D}$. Thus, we have
\begin{equation}\begin{split}\nonumber
(1-|a|^2)^{{2+\beta}-\frac{q(2+\alpha)}{p}}|g(a)|^q&\leq C(1-|a|^2)^{{\beta}-\frac{q(2+\alpha)}{p}}\int_{\Delta(a,r)}|g(\omega)|^qdA(\omega)\\
&\leq C|a|^{-q}\int_{\Delta(a,r)}|(S_gf_a)'(\omega)|^q(1-|\omega|^2)^{\beta}dA(\omega)\\
&\leq C|a|^{-q}\|S_gf_a\|^q_{D_{\beta}^q}\rightarrow0 \qquad\text{ as }|a|\rightarrow1^-\,.
\end{split}\end{equation}
Thus, $|g(a)|=o\left((1-|a|^2)^{\frac{2+\alpha}{p}-\frac{2+\beta}{q}}\right)\,,$ as $|a|\rightarrow1^-$\,.
\end{proof}

As an immediate corollary, we obtain the known results originally proven by \v{C}u\v{c}kovi\'{c} and Zhao \cite{ZCRZ}\,.

\begin{corollary}
Let $-1<\alpha,\beta$, $g\in H(\mathbb D)$ and $0<p<q<\infty$\,. Then $M_g:A_{\alpha}^p\rightarrow A_{\beta}^q$ is compact if and only if $|g(z)|=o\left((1-|z|^2)^{\frac{2+\alpha}{p}-\frac{2+\beta}{q}}\right)\,,$ as $|z|\rightarrow1^-$\,.
\end{corollary}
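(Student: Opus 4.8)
The plan is to deduce this corollary from Theorem~\ref{th7} exactly as the boundedness corollary was deduced from Theorem~\ref{th3}, namely through the intertwining relation between $M_g$ on Bergman spaces and $S_g$ on Dirichlet spaces. Writing $D$ for differentiation and $(Jh)(z):=\int_0^z h(\omega)\,d\omega$ for integration from the origin, I would first record the mapping properties: for $-1<\gamma$ and $0<s<\infty$, the operator $D$ maps $D_\gamma^s$ boundedly into $A_\gamma^s$ (since $\|f'\|_{A_\gamma^s}\le\|f\|_{D_\gamma^s}$), while $J$ maps $A_\gamma^s$ isometrically into $D_\gamma^s$ (because $Jh$ vanishes at $0$ and $(Jh)'=h$). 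Since $(S_gf)'=f'g$, a direct computation yields the operator identities
\begin{equation}\nonumber
M_g=D\,S_g\,J\colon A_\alpha^p\to A_\beta^q\qquad\text{and}\qquad S_g=J\,M_g\,D\colon D_\alpha^p\to D_\beta^q\,.
\end{equation}

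Granting these identities, the equivalence is immediate. If $S_g:D_\alpha^p\to D_\beta^q$ is compact, then $M_g=DS_gJ$ is compact, being the composition of the compact operator $S_g$ with the bounded operators $D$ and $J$; conversely, if $M_g:A_\alpha^p\to A_\beta^q$ is compact, then $S_g=JM_gD$ is compact for the same reason. Thus $M_g$ is compact precisely when $S_g$ is, and Theorem~\ref{th7} characterizes the compactness of $S_g$ by the condition $|g(z)|=o\bigl((1-|z|^2)^{\frac{2+\alpha}{p}-\frac{2+\beta}{q}}\bigr)$ as $|z|\to1^-$, which is exactly the desired conclusion.

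There is no substantial obstacle here; the argument is essentially a transport of Theorem~\ref{th7} through differentiation. The only point meriting a moment's attention is that $D$ is not injective, as it annihilates constants — but this is harmless, since both $S_g$ and $M_g$ involve $g$ only through multiplication against a derivative, so the constant term never enters and the two displayed identities hold exactly; moreover, adding or deleting the one-dimensional space of constants cannot affect compactness in any case. I would therefore keep the write-up to a few lines, parallel to the proof of the boundedness corollary.
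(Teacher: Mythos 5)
Your proposal is correct and follows essentially the route the paper intends: the corollary is deduced from Theorem~\ref{th7} via the differentiation intertwining $DS_g=M_gD$, exactly as the boundedness corollary was deduced from Theorem~\ref{th3}. Your explicit two-sided factorization $M_g=DS_gJ$ and $S_g=JM_gD$, with $J$ the integration operator acting isometrically from $A_\gamma^s$ into $D_\gamma^s$, simply makes rigorous the transfer of compactness in both directions that the paper leaves implicit.
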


\begin{theorem}\label{th8}
Let $-1<\alpha,\beta$, $g\in H(\mathbb D)$ and $0<p<q<\infty$\,. Define $d\mu_{g,q,\beta}(z):=(1-|z|^2)^\beta|g'(z)|^qdA(z)$. Then the following statements hold:\\
\phantom{(1)}(1) If $p<\alpha+2$, then $M_g:D_{\alpha}^p\rightarrow D_{\beta}^q$ is compact if and only if $\mu_{g,q,\beta}(z)$ is a vanishing $\frac{q(\alpha+2-p)}{p}$-Carleson measure and $|g(z)|=o\left((1-|z|^2)^{\frac{2+\alpha}{p}-\frac{2+\beta}{q}}\right)\,,$ as $|z|\rightarrow1^-$;\\
\phantom{(1)}(2) If $p=\alpha+2$, then $M_g:D_{\alpha}^p\rightarrow D_{\beta}^q$ is compact if and only if $|g(z)|=o\left((1-|z|^2)^{\frac{2+\alpha}{p}-\frac{2+\beta}{q}}\right)$ as $|z|\rightarrow1^-$ and $\mu_{g,q,\beta}(S(I))=o\left(\left(\log{\frac{1}{|I|}}\right)^{(1/p-1)q}\right)$ as $|I|\rightarrow0$;\\
\phantom{(1)}(3) If $p>\alpha+2$, then $M_g:D_{\alpha}^p\rightarrow D_{\beta}^q$ is compact if and only if $|g(z)|=o\left((1-|z|^2)^{\frac{2+\alpha}{p}-\frac{2+\beta}{q}}\right)$ as $|z|\rightarrow1^-$ and $g\in D_{\beta}^q$\,.
\end{theorem}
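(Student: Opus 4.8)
The plan is to mirror the proof of Theorem~\ref{th4}, replacing ``bounded'' by ``compact'' throughout and using the compactness characterizations of $T_g$ and $S_g$ (Theorems~\ref{th6} and~\ref{th7}) in place of the boundedness ones. The decomposition $(M_gf)(z)=f(0)g(0)+(T_gf)(z)+(S_gf)(z)$ is the backbone: the map $f\mapsto f(0)g(0)$ is a rank-one, hence compact, operator from $D_\alpha^p$ into $D_\beta^q$, so $M_g$ is compact as soon as both $T_g$ and $S_g$ are.

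For the sufficiency, in each of the three cases the two listed hypotheses are precisely the growth condition on $g$ and the vanishing/logarithmic/finiteness condition on $\mu_{g,q,\beta}$. By Theorem~\ref{th7} the former is equivalent to the compactness of $S_g$, and by Theorem~\ref{th6} the latter is equivalent to the compactness of $T_g$; adding the rank-one term, $M_g=f(0)g(0)+T_g+S_g$ is a finite sum of compact operators and hence compact.

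For the necessity I would first reduce, exactly as in Theorem~\ref{th4}, to establishing the growth condition $|g(z)|=o\left((1-|z|^2)^{\frac{2+\alpha}{p}-\frac{2+\beta}{q}}\right)$: once this is known, Theorem~\ref{th7} gives that $S_g$ is compact, whence $T_g=M_g-S_g-f(0)g(0)$ is compact (a difference of two compact operators and a rank-one operator), and Theorem~\ref{th6} then delivers the required condition on $\mu_{g,q,\beta}$ in each case. To obtain the growth condition I would use a normalized family of test functions that vanishes at the point $a$ --- the functions $F_a$ from the proof of Theorem~\ref{th4}, for which $F_a(a)=0$ --- uniformly bounded in $D_\alpha^p$ and converging to $0$ locally uniformly as $|a|\to1^-$, so that compactness of $M_g$ forces $\|M_gF_a\|_{D_\beta^q}\to0$. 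Because $F_a(a)=0$, differentiation gives $(M_gF_a)'(a)=F_a'(a)g(a)$, so the $g'$-contribution drops out at $a$; combining the sub-mean-value estimate for $|(M_gF_a)'|^q$ over a pseudohyperbolic disk $\Delta(a,r)$ with $|F_a'(a)|\asymp(1-|a|^2)^{-(2+\alpha)/p}$ then yields the desired little-o bound on $|g(a)|$, exactly as in the converse part of Theorem~\ref{th7}.

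The main obstacle I anticipate is constructing the vanishing test function uniformly across the three regimes $p\lessgtr\alpha+2$. The naive choice $F_a=f_a-f_a(a)$ is well behaved only when $p>\alpha+2$: there $f_a(a)=(1-|a|^2)^{1-(2+\alpha)/p}\to0$, so $F_a$ is a bounded null sequence. When $p=\alpha+2$ one has $f_a(a)\equiv1$, so $F_a\to-1$ locally uniformly rather than to $0$; and when $p<\alpha+2$ the constant $f_a(a)\to\infty$, so $\|F_a\|_{D_\alpha^p}$ is no longer uniformly bounded. Hence for $p<\alpha+2$ I would replace the subtracted constant by a lower-order kernel correction, a suitable multiple of $(1-\bar az)^{-(2(2+\alpha)/p-2)}$, chosen to enforce simultaneously $F_a(a)=0$ and $F_a'(a)\asymp(1-|a|^2)^{-(2+\alpha)/p}$ while keeping the norm bounded and the local-uniform limit zero; verifying these properties via the standard estimates for $\int_{\mathbb D}(1-|z|^2)^\alpha|1-\bar az|^{-c}\,dA(z)$ is the one genuinely delicate computation. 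The borderline case $p=\alpha+2$ is the most resistant, since the correction degenerates to a constant there, and I expect to need the logarithmic-Carleson/embedding equivalence recorded in Theorem~\ref{th5}(2) to pin down the growth of $g$.
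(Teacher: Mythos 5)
Your architecture is exactly the paper's: the decomposition $M_gf=f(0)g(0)+T_gf+S_gf$ for sufficiency, the reduction of necessity to the single growth condition on $g$, and a test family vanishing at $a$ fed into a sub-mean-value estimate on pseudohyperbolic disks. In fact the paper's own proof uses precisely the ``naive choice'' you reject, $F_a=f_a-f_a(a)$ with $f_a(a)=(1-|a|^2)^{(p-\alpha-2)/p}$, and your diagnosis of it is correct and sharper than the paper's: for $p<\alpha+2$ the subtracted constant blows up, so $|F_a(0)|\to\infty$ and the family is not norm-bounded in $D_\alpha^p$; for $p=\alpha+2$ one has $F_a\to-1$ locally uniformly, so compactness only yields $\|M_gF_a+g\|_{D_\beta^q}\to0$ rather than $\|M_gF_a\|_{D_\beta^q}\to0$; only for $p>\alpha+2$ is $F_a$ a bounded null family. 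Your lower-order correction for $p<\alpha+2$, namely subtracting $(1-|a|^2)^{(\alpha+2)/p-1}(1-\bar az)^{-(2(\alpha+2)/p-2)}$, does satisfy $F_a(a)=0$, $F_a'(a)=\bar a(1-|a|^2)^{-(\alpha+2)/p}$, a uniform norm bound, and local-uniform decay to zero, so that regime of your argument is sound.

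The genuine gap is the borderline case $p=\alpha+2$, which you leave open: ``I expect to need Theorem~\ref{th5}(2)'' is not an argument, and Theorem~\ref{th5}(2) is a statement about the measure $\mu$ (equivalently, via Theorem~\ref{th6}, about compactness of $T_g$), so it cannot by itself produce the pointwise little-o bound on $|g|$. The gap closes at once if you correct upward instead of downward: take
$$F_a(z):=\frac{(1-|a|^2)^{(\alpha+2)/p}}{(1-\bar az)^{2(\alpha+2)/p-1}}-\frac{(1-|a|^2)^{(\alpha+2)/p+1}}{(1-\bar az)^{2(\alpha+2)/p}}\,,$$
which is exactly the test family the paper uses in the proof of Theorem~\ref{th11}. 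For every $0<p<\infty$ and $\alpha>-1$ --- in particular at $p=\alpha+2$ --- the standard estimate for $\int_{\mathbb D}(1-|z|^2)^\alpha|1-\bar az|^{-c}\,dA(z)$ gives $\|F_a\|_{D_\alpha^p}\lesssim1$ uniformly (the exponent $c=2(\alpha+2)+p$ for the second term always exceeds $\alpha+2$, so no case distinction arises); moreover $F_a\to0$ locally uniformly as $|a|\to1^-$ since both coefficients carry positive powers of $1-|a|^2$, and a direct computation gives $F_a(a)=0$ and $F_a'(a)=-\bar a(1-|a|^2)^{-(\alpha+2)/p}$. This single family runs your sub-mean-value argument uniformly in all three regimes, eliminating both the case analysis and the unresolved borderline case.
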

\begin{proof}
Since $(M_gf)(z)=f(0)g(0)+(T_gf)(z)+(S_gf)(z)$\,, the sufficiency follows immediately from Theorem~\ref{th6} and Theorem~\ref{th7}. It remains to prove the necessary conditions and in this case, it is obvious that if we can prove that $|g(z)|=o\left((1-|z|^2)^{\frac{2+\alpha}{p}-\frac{2+\beta}{q}}\right)$ as $|z|\rightarrow1^-$, then all the other statements follow immediately from Theorem~\ref{th6} and Theorem~\ref{th7} again.

Given $a\in\mathbb{D}$, define the function $F_a$ by
$$F_a(z):=\frac{(1-|a|^2)^{(\alpha+2)/p}}{(1-\bar{a}z)^{2(\alpha+2)/p-1}}-(1-|a|^2)^{(p-\alpha-2)/p}\,.$$
Then $F_a(a)=0$, and the remainder of the proof is similar to that of Theorem~\ref{th7}.
\end{proof}

\section{\bf Order boundedness of Volterra type operators}
Let $X$ be a Banach space of holomorphic functions defined on $\mathbb{D}$\,, $q>0$\,, $(\Omega,\mathcal{A},\mu)$ a measure space and
$$L^p(\Omega,\mathcal{A},\mu):=\{f|\ f:\Omega\to\mathbb{C} \text{ is measurable and } \int_{\Omega}|f|^pd\mu<\infty\}\,.$$
An operator $T:X\to L^p(\Omega,\mathcal{A},\mu)$ is said to be order bounded if there exists a nonnegative function $g\in L^p(\Omega,\mathcal{A},\mu)$ such that for all $f\in X$ with $\|f\|_X\leq 1$, it holds that
$$|T(f)(x)|\leq g(x)\,, \quad \text{ a.e. } [\mu]\,.$$

Order boundedness  plays an important role in studying the properties of many concrete operators acting between Banach spaces like Hardy spaces, weighted Bergman spaces and so forth (see \cite{RH,HJ,SU,WWG}). Recently, order boundedness of weighted composition operators between weighted Dirichlet spaces were studied in \cite{GKZ,AS}\,. In this section, we investigate the order boundedness of Volterra type operators between weighted Dirichlet spaces. Recall that in this case, if we define the measure $A_{\beta}$ by $dA_{\beta}(z)=(1-|z|^2)^\beta dA(z)$, then an operator $T:D_{\alpha}^p\rightarrow D_{\beta}^q$ is order bounded if and only if there exists a nonnegative function $g\in L^q(A_{\beta})$ such that for all $f\in D_{\alpha}^p$ with $\|f\|_{D_{\alpha}^p}\leq 1$, it holds that
$$|T(f)'(z)|\leq g(z)\,, \quad \text{ a.e. } [A_{\beta}]\,.$$

Before proving the results, we first give some auxiliary lemmas.

\begin{lemma}\label{le1}
Let $\alpha>-1$ and $0<p<\infty$. Denote $\delta_z$ as the point evaluation functional on $D_{\alpha}^p$, then

(1) for $p<\alpha+2$,\quad $\|\delta_z\|\approx \frac{1}{(1-|z|^2)^{(\alpha+2-p)/p}}$;

(2) for $p=\alpha+2$,\quad $\|\delta_z\|\approx \frac{1}{\left(\log(\frac{2}{1-|z|^2})\right)^{(1-p)/p}}$;

(3) for $p>\alpha+2$,\quad $\|\delta_z\|\approx 1$\,.
\end{lemma}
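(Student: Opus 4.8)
The plan is to estimate the norm of the point evaluation functional $\delta_z$ by testing it against a well-chosen family of extremal functions and then proving a matching upper bound that holds for every $f\in D_\alpha^p$. For the lower bounds, I would use the very functions $f_a$ that already appear in the proofs of Theorem~\ref{th3} and Theorem~\ref{th7}, namely $f_a(z)=(1-|a|^2)^{(\alpha+2)/p}(1-\bar{a}z)^{1-2(\alpha+2)/p}$ in the range $p<\alpha+2$. Since $\|f_a\|_{D_\alpha^p}\leq C$ uniformly and $|f_a(a)|\approx (1-|a|^2)^{(\alpha+2)/p}/(1-|a|^2)^{2(\alpha+2)/p-1}=(1-|a|^2)^{(p-\alpha-2)/p}$, evaluating at $z=a$ gives $\|\delta_a\|\gtrsim |f_a(a)|/\|f_a\|_{D_\alpha^p}\gtrsim (1-|a|^2)^{-(\alpha+2-p)/p}$, which is exactly the claimed lower bound in case~(1). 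For cases~(2) and~(3) I would adapt the test function: in the critical case $p=\alpha+2$ the natural competitor is a logarithmic function such as $f_a(z)=\bigl(\log\tfrac{2}{1-\bar a z}\bigr)/\bigl(\log\tfrac{2}{1-|a|^2}\bigr)^{1/p}$, whose derivative lands in $L^p((1-|z|^2)^\alpha\,dA)$ with norm comparable to $1$ while $|f_a(a)|\approx (\log\tfrac{2}{1-|a|^2})^{1-1/p}$, and in case~(3) one simply tests against constants (or bounded functions), giving $\|\delta_a\|\gtrsim 1$.

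For the upper bounds, the main tool is the standard pointwise growth estimate for Dirichlet-type functions, which I would derive from the integral representation $f(z)=f(0)+\int_0^z f'(w)\,dw$ together with the known sharp estimates for the weighted Bergman projection / point evaluation on $A_\alpha^p$. Concretely, for any $f\in D_\alpha^p$ one writes $|f(z)-f(0)|\leq\int_0^{|z|}|f'(tz/|z|)|\,dt$ and estimates the radial integral by H\"older's inequality against the weight $(1-t^2)^\alpha$; the growth exponent that appears is precisely $(\alpha+2-p)/p$ in the subcritical case, a logarithmic factor in the critical case $p=\alpha+2$, and a bounded factor in the supercritical case $p>\alpha+2$. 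The last of these reflects the embedding $D_\alpha^p\subset H^\infty$ already invoked in the proof of Theorem~\ref{th5}(3). These three regimes are controlled by whether the integral $\int_0^1 (1-t^2)^{-\alpha/(p-1)}\,dt$ (for $p>1$; a modification is needed for $0<p\leq 1$) converges, diverges logarithmically, or is replaced by a supremum, which is exactly the trichotomy in the statement.

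I would organize the argument by first recording the sharp growth lemma for $|f(z)|$ in terms of $\|f\|_{D_\alpha^p}$ across the three regimes (this gives $\|\delta_z\|\lesssim$ the stated quantity), and then verifying the reverse inequality regime by regime using the extremal functions above (this gives $\|\delta_z\|\gtrsim$ the same quantity), so that the two bounds combine into the asserted $\approx$. The main obstacle, I expect, is the growth estimate in the quasi-Banach range $0<p<1$, where H\"older's inequality is unavailable and one must instead use the subharmonicity of $|f'|^p$ together with a covering of the Bergman metric disk $\Delta(z,r)$ to transfer the pointwise value of $f'$ into an integral average; care is also needed at the critical exponent $p=\alpha+2$ to produce the exact power $(1-p)/p$ on the logarithm rather than a nearby but incorrect exponent. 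Once the sharp growth lemma is in hand, matching it against the test functions $f_a$ is routine bookkeeping.
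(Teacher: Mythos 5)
Your lower-bound half is correct and complete: the functions $f_a$ give $\|\delta_a\|\gtrsim(1-|a|^2)^{-(\alpha+2-p)/p}$ in case (1), the logarithmic competitor gives the power $(p-1)/p$ of the logarithm in case (2), and constants settle case (3). This is a legitimate self-contained alternative to the paper's proof, which does none of this work and simply cites Lemmas~2.2 and~2.3 of \cite{GKZ} for (1)--(2) and the embedding $D_{\alpha}^p\subset H^\infty$ for (3). The gap is in your upper bound, and it sits precisely at the point you flag but do not resolve.

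The concrete mechanism you propose---H\"older's inequality on $\int_0^{|z|}|f'(tz/|z|)|\,dt$ against the weight $(1-t^2)^\alpha$, with the trichotomy read off from the convergence of $\int_0^1(1-t^2)^{-\alpha/(p-1)}\,dt$---fails on two counts. First, the radial integral $\int_0^{|z|}|f'(t\zeta)|^p(1-t^2)^{\alpha}\,dt$ is \emph{not} dominated by $\|f\|^p_{D_\alpha^p}$: for $f'(w)=(1-w)^{-\gamma}$, $\zeta=1$, and $\alpha+1\leq\gamma p<\alpha+2$, the area integral is finite while the radial one diverges. Converting an area bound into a radial bound (by subharmonicity of $|f'|^p$ on the disks $D(t\zeta,(1-t)/2)$ plus Fubini) costs exactly one power of $(1-t)$, so the weight you may legitimately place on the ray is $(1-t)^{\alpha+1}$, not $(1-t)^{\alpha}$. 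Second, and as a consequence, your stated criterion puts the breakpoint at $\alpha/(p-1)=1$, i.e.\ $p=\alpha+1$, which is not the trichotomy of the lemma; with the corrected weight the breakpoint $(\alpha+1)/(p-1)=1$ is exactly $p=\alpha+2$, and H\"older then delivers $(1-|z|^2)^{-(\alpha+2-p)/p}$, $\bigl(\log\tfrac{2}{1-|z|^2}\bigr)^{(p-1)/p}$, and $O(1)$ in the three regimes, as required. This matters most in case (2), the only genuinely delicate case: your fallback route (the pointwise estimate $|f'(w)|\lesssim\|f\|_{D_\alpha^p}(1-|w|^2)^{-(\alpha+2)/p}$ followed by radial integration) does prove (1) and (3) for all $0<p<\infty$, but in case (2) it yields exponent $1$ on the logarithm, strictly weaker than the sharp $(p-1)/p$, so as written your proposal does not establish the upper bound in (2). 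Note also that in case (2) one automatically has $p=\alpha+2>1$, so H\"older is available exactly where the delicate argument is needed and the quasi-Banach concern is confined to cases (1) and (3), where the pointwise-estimate route already suffices.
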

\begin{proof}
(1) and (2) follows from \cite[Lemma~2.2 and Lemma~2.3]{GKZ} while (3) follows directly from the fact that $D_{\alpha}^p\subset H^\infty$ for $p>\alpha+2$\,.
\end{proof}

\begin{lemma}\label{le2}
Let $\alpha>-1$ and $0<p<\infty$. Denote $\delta'_z$ as the derivative point evaluation functional on $D_{\alpha}^p$, then
$\|\delta'_z\|\approx \frac{1}{(1-|z|^2)^{(\alpha+2)/p}}\,.$
\end{lemma}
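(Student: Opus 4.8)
The plan is to reduce the derivative point evaluation on $D_{\alpha}^p$ to the ordinary point evaluation on the weighted Bergman space $A_{\alpha}^p$, exploiting that differentiation identifies the two spaces. Indeed, straight from the definitions one has $\|f\|_{D_{\alpha}^p}^p=|f(0)|^p+\|f'\|_{A_{\alpha}^p}^p$, so $f\mapsto f'$ maps $D_{\alpha}^p$ into $A_{\alpha}^p$ with $\|f'\|_{A_{\alpha}^p}\le\|f\|_{D_{\alpha}^p}$; conversely every $h\in A_{\alpha}^p$ is of the form $f'$ for $f(z)=\int_0^z h(w)\,dw\in D_{\alpha}^p$, and this $f$ satisfies $f(0)=0$ and $\|f\|_{D_{\alpha}^p}=\|h\|_{A_{\alpha}^p}$. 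Thus $\delta'_z(f)=f'(z)$ is literally the value at $z$ of the $A_{\alpha}^p$-function $f'$, and the norm of $\delta'_z$ on $D_{\alpha}^p$ coincides, up to constants, with the norm of the point evaluation on $A_{\alpha}^p$.

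For the upper bound I would invoke the classical growth estimate for the weighted Bergman space: for every $h\in A_{\alpha}^p$,
$$|h(z)|\le \frac{C\|h\|_{A_{\alpha}^p}}{(1-|z|^2)^{(\alpha+2)/p}}\,,$$
which follows from the subharmonicity of $|h|^p$ together with the sub-mean-value inequality over a Euclidean disk of radius comparable to $1-|z|$, on which the weight $(1-|w|^2)^\alpha$ is comparable to $(1-|z|^2)^\alpha$ (see \cite{zhu,HKZ}). Applying this with $h=f'$ gives $|f'(z)|\le C\|f'\|_{A_{\alpha}^p}(1-|z|^2)^{-(\alpha+2)/p}\le C\|f\|_{D_{\alpha}^p}(1-|z|^2)^{-(\alpha+2)/p}$, whence $\|\delta'_z\|\lesssim (1-|z|^2)^{-(\alpha+2)/p}$.

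For the matching lower bound I would exhibit a normalized test function, in the spirit of the $f_a$ used in the proof of Theorem~\ref{th3}. Fixing any $s$ with $sp>\alpha+2$ and setting
$$f_a(z):=\int_0^z\frac{(1-|a|^2)^{s-(\alpha+2)/p}}{(1-\bar a w)^{s}}\,dw\,,$$
the standard integral asymptotics $\int_{\mathbb{D}}(1-|w|^2)^\alpha|1-\bar a w|^{-sp}\,dA(w)\approx(1-|a|^2)^{\alpha+2-sp}$, valid since $sp>\alpha+2$, yield $\|f_a\|_{D_{\alpha}^p}=\|f_a'\|_{A_{\alpha}^p}\approx 1$, while $f_a'(a)=(1-|a|^2)^{-(\alpha+2)/p}$. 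Testing $\delta'_a$ against $f_a$ then gives $\|\delta'_a\|\ge|f_a'(a)|/\|f_a\|_{D_{\alpha}^p}\gtrsim(1-|a|^2)^{-(\alpha+2)/p}$, which together with the upper bound proves the asserted equivalence.

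The only genuinely technical ingredients are the $A_{\alpha}^p$ growth estimate and the integral asymptotics, both classical; the rest is bookkeeping. I expect no serious obstacle, the one point requiring care being the choice of normalizing exponents so that $\|f_a\|_{D_{\alpha}^p}$ remains bounded above and below uniformly in $a$, which is exactly what the constraint $sp>\alpha+2$ secures.
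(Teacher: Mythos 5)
Your proof is correct and follows essentially the same route as the paper: both reduce $\delta'_z$ on $D_{\alpha}^p$ to the point evaluation on $A_{\alpha}^p$ via the isometry $f\mapsto f'$ (with $f(0)=0$), after which the paper simply cites \cite[Lemma~3.2]{HKZ} for the Bergman-space estimate. The only difference is that you prove that cited estimate yourself (subharmonicity for the upper bound, the standard test functions with $sp>\alpha+2$ for the lower bound), which is fine but not a new approach.
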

\begin{proof}
By definition, $f\in D_{\alpha}^p$ if and only if $f'\in A_{\alpha}^p$, thus the lemma follows from \cite[Lemma~3.2]{HKZ}.
\end{proof}

Now we are ready to prove our results.
\begin{theorem}\label{th9}
Let $-1<\alpha,\beta$, $g\in H(\mathbb D)$ and $0<p,q<\infty$\,. Then the following statements hold:\\
\phantom{(1)}(1) If $p<\alpha+2$, then $T_g:D_{\alpha}^p\rightarrow D_{\beta}^q$ is order bounded if and only if $$\int_{\mathbb{D}}\frac{|g'(z)|^q}{(1-|z|^2)^{q(\alpha+2-p)/p}}dA_{\beta}<\infty\,;$$
\phantom{(1)}(2) If $p=\alpha+2$, then $T_g:D_{\alpha}^p\rightarrow D_{\beta}^q$ is order bounded if and only if $$\int_{\mathbb{D}}\frac{|g'(z)|^q}{\left(\log(\frac{2}{1-|z|^2})\right)^{q(1-p)/p}}dA_{\beta}<\infty\,;$$
\phantom{(1)}(3) If $p>\alpha+2$, then $T_g:D_{\alpha}^p\rightarrow D_{\beta}^q$ is order bounded if and only if $g\in D_{\beta}^q$\,.
\end{theorem}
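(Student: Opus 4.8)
The plan is to reduce order boundedness of $T_g$ to a single integrability condition on an explicit ``extremal'' dominating function, and then to read off the three cases from Lemma~\ref{le1}.

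First I would use the defining relation $(T_gf)(z)=\int_0^z f(\omega)g'(\omega)\,d\omega$, which differentiates to $(T_gf)'(z)=f(z)g'(z)$. Hence, for $f\in D_\alpha^p$ with $\|f\|_{D_\alpha^p}\le 1$, the definition of the point evaluation functional gives the pointwise bound
$$|(T_gf)'(z)|=|f(z)|\,|g'(z)|\le \|\delta_z\|\,|g'(z)|=:\Phi(z),$$
and this bound is sharp in the sense that $\sup_{\|f\|_{D_\alpha^p}\le 1}|f(z)|=\|\delta_z\|$ for every fixed $z$. This identifies $\Phi$ as the natural candidate for the dominating function and suggests that $T_g:D_\alpha^p\to D_\beta^q$ is order bounded if and only if $\Phi\in L^q(A_\beta)$, that is,
$$\int_{\mathbb{D}}\|\delta_z\|^q\,|g'(z)|^q\,dA_\beta(z)<\infty.$$

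The sufficiency is then immediate: if $\Phi\in L^q(A_\beta)$, the displayed pointwise bound exhibits $\Phi$ itself as a dominating function, so $T_g$ is order bounded. For the necessity, suppose $T_g$ is order bounded with dominating function $h\in L^q(A_\beta)$, so that for each $f$ in the closed unit ball of $D_\alpha^p$ one has $|f(z)|\,|g'(z)|\le h(z)$ for a.e.\ $z$. The subtlety---and the step I expect to be the main obstacle---is to interchange the quantifier ``for all $f$'' with the ``almost everywhere'' in $z$, since a priori the exceptional null set depends on $f$. I would resolve this using separability of $D_\alpha^p$: choosing a countable dense subset $\{f_n\}$ of the unit ball, the bound $|f_n(z)|\,|g'(z)|\le h(z)$ holds for all $n$ simultaneously off a single null set, and the continuity of $\delta_z$ (Lemma~\ref{le1}) yields $\sup_n|f_n(z)|=\sup_{\|f\|_{D_\alpha^p}\le 1}|f(z)|=\|\delta_z\|$ at each such $z$. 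Letting $n$ vary, this gives $\Phi(z)=\|\delta_z\|\,|g'(z)|\le h(z)$ a.e., whence $\Phi\in L^q(A_\beta)$.

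Finally I would substitute the three asymptotics of $\|\delta_z\|$ from Lemma~\ref{le1} into the condition $\int_{\mathbb{D}}\|\delta_z\|^q|g'(z)|^q\,dA_\beta<\infty$. In the case $p<\alpha+2$ this produces $\int_{\mathbb{D}}|g'(z)|^q(1-|z|^2)^{-q(\alpha+2-p)/p}\,dA_\beta<\infty$; in the case $p=\alpha+2$ it produces the logarithmic condition with exponent $q(1-p)/p$ appearing in statement (2); and in the case $p>\alpha+2$, where $\|\delta_z\|\approx 1$, it reduces to $\int_{\mathbb{D}}|g'(z)|^q\,dA_\beta<\infty$, which is precisely $g\in D_\beta^q$. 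These are exactly statements (1)--(3). The only points requiring care beyond the quantifier interchange are the separability of $D_\alpha^p$ (which holds since polynomials with rational coefficients are dense) and the fact that $dA_\beta$ and Lebesgue area measure share the same null sets, so that ``a.e.\ $[A_\beta]$'' may be treated as ordinary almost everywhere on $\mathbb{D}$.
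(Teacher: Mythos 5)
Your proposal is correct and follows essentially the same route as the paper: reduce order boundedness of $T_g$ to integrability of $\Phi(z)=\|\delta_z\|\,|g'(z)|$ with respect to $dA_\beta$, and then read off the three cases from the asymptotics of $\|\delta_z\|$ in Lemma~\ref{le1}. The only difference is that you explicitly justify the necessity step $\|\delta_z\|\,|g'(z)|\le h(z)$ a.e.\ via a countable dense subset of the unit ball, a quantifier-interchange point the paper's proof passes over silently; this is a careful refinement of the same argument rather than a different method.
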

\begin{proof}
(1) Assume first that $T_g:D_{\alpha}^p\rightarrow D_{\beta}^q$ is order bounded. Then there exists $h\in L^q(A_{\beta})$ such that for all $f\in D_{\alpha}^p$ with $\|f\|_{D_{\alpha}^p}\leq 1$, it holds that
$$|f(z)g'(z)|\leq h(z)\,, \quad \text{ a.e. } [A_{\beta}]\,.$$
Hence, by Lemma~\ref{le1}, the inequality
\begin{equation}\begin{split}\nonumber
h(z)\geq |g'(z)|\|\delta_z\|\gtrsim \frac{|g'(z)|}{(1-|z|^2)^{(\alpha+2-p)/p}}\quad \text{ holds a.e. }[A_{\beta}]\,.
\end{split}\end{equation}
Therefore, it holds that $\int_{\mathbb{D}}\frac{|g'(z)|^q}{(1-|z|^2)^{q(\alpha+2-p)/p}}dA_{\beta}<\infty\,.$

Conversely, suppose that $\int_{\mathbb{D}}\frac{|g'(z)|^q}{(1-|z|^2)^{q(\alpha+2-p)/p}}dA_{\beta}<\infty\,.$ Let
$$h(z)=\frac{|g'(z)|}{(1-|z|^2)^{(\alpha+2-p)/p}}\,,$$
then by Lemma~\ref{le1}, for all $f\in D_{\alpha}^p$ with $\|f\|_{D_{\alpha}^p}\leq 1$, $$|f(z)g'(z)|\leq |g'(z)|\|\delta_z\|\lesssim h(z)\,, \quad \text{ a.e. } [A_{\beta}]\,.$$
Therefore, $T_g:D_{\alpha}^p\rightarrow D_{\beta}^q$ is order bounded.

The proof of (2) and (3) is almost similar to that of (1), thus we omit the details.
\end{proof}

By Theorem~\ref{th2}, Theorem~\ref{th6} and Theorem~\ref{th9}, we obtain the following corollary.
\begin{corollary}\label{cor3}
Let $-1<\alpha,\beta$, $g\in H(\mathbb D)$ and $\alpha+2<p<q<\infty$\,. Then the following statements are equivalent:\\
\phantom{(1)}(1) $T_g:D_{\alpha}^p\rightarrow D_{\beta}^q$ is bounded;\\
\phantom{(1)}(2) $T_g:D_{\alpha}^p\rightarrow D_{\beta}^q$ is compact;\\
\phantom{(1)}(3) $T_g:D_{\alpha}^p\rightarrow D_{\beta}^q$ is order bounded;\\
\phantom{(1)}(4) $g\in D_{\beta}^q$\,.
\end{corollary}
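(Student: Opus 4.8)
The plan is to recognize that this corollary is not a fresh result but a consolidation: in the regime $\alpha+2<p<q<\infty$ the three operator-theoretic properties in (1)--(3) have each already been pinned down by one and the same scalar condition on the symbol, namely $g\in D_{\beta}^q$, which is statement (4). Consequently I would prove the four-fold equivalence by showing that each of (1), (2), (3) is individually equivalent to (4), after which transitivity closes the cycle; no new embedding estimate or test-function computation is needed.

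Concretely, I would proceed in three short steps. First, since $p>\alpha+2$, I would apply case (3) of Theorem~\ref{th2} to get that (1) holds iff $\mu_{g,q,\beta}$ is a finite measure, which in turn holds iff (4); here the equivalence between finiteness of the measure and $g\in D_{\beta}^q$ is immediate, because the total mass $\int_{\mathbb D}(1-|z|^2)^\beta|g'(z)|^q\,dA(z)$ differs from $\|g\|_{D_\beta^q}^q$ only by the always-finite term $|g(0)|^q$. Second, I would apply case (3) of Theorem~\ref{th6} to obtain (2)$\iff$(4) in exactly the same way. Third, I would invoke case (3) of Theorem~\ref{th9} (whose hypotheses $0<p,q<\infty$ and $p>\alpha+2$ are met) to get (3)$\iff$(4) directly. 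Chaining these three equivalences yields (1)$\iff$(2)$\iff$(3)$\iff$(4).

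The only thing that genuinely requires attention---and it is the ``hard part'' only in a bookkeeping sense---is verifying that the parameter ranges of the three cited theorems all simultaneously cover the hypothesis $\alpha+2<p<q<\infty$, and that each theorem's third case pivots on the very same condition (4). All the analytic substance (the Carleson-measure characterizations and the order-boundedness integral test) lives in Theorems~\ref{th2}, \ref{th6}, and \ref{th9}; the corollary is precisely the observation that once $p>\alpha+2$ these otherwise distinct conditions degenerate to plain membership $g\in D_{\beta}^q$. I would therefore keep the write-up to a few lines, citing the three theorems together with the trivial mass-versus-norm identity, rather than reproving anything.
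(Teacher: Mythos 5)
Your proposal is correct and matches the paper exactly: the paper derives this corollary precisely by combining case (3) of Theorems~\ref{th2}, \ref{th6} and \ref{th9}, each of which characterizes the respective property by the single condition $g\in D_{\beta}^q$ in the regime $p>\alpha+2$. Your added remark that finiteness of $\mu_{g,q,\beta}$ coincides with membership $g\in D_{\beta}^q$ (up to the harmless $|g(0)|^q$ term) is already built into the statement of Theorem~\ref{th2}(3), so nothing further is needed.
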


\begin{theorem}\label{th10}
Let $-1<\alpha,\beta$, $g\in H(\mathbb D)$ and $0<p,q<\infty$\,. Then
$S_g:D_{\alpha}^p\rightarrow D_{\beta}^q$ is order bounded if and only if $$\int_{\mathbb{D}}\frac{|g(z)|^q}{(1-|z|^2)^{q(\alpha+2)/p}}dA_{\beta}<\infty\,.$$
\end{theorem}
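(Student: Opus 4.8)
The plan is to mirror the proof of Theorem~\ref{th9}, replacing the point evaluation functional $\delta_z$ by the derivative point evaluation functional $\delta'_z$ of Lemma~\ref{le2}. The starting observation is that $(S_gf)'(z)=f'(z)g(z)$, so that, by the characterization of order boundedness recalled at the beginning of this section, $S_g:D_{\alpha}^p\to D_{\beta}^q$ is order bounded precisely when there exists $h\in L^q(A_{\beta})$ with $|f'(z)g(z)|\leq h(z)$ a.e.\ $[A_{\beta}]$ for every $f\in D_{\alpha}^p$ with $\|f\|_{D_{\alpha}^p}\leq1$. Since $|f'(z)|\leq\|\delta'_z\|\,\|f\|_{D_{\alpha}^p}$ and, by Lemma~\ref{le2}, $\|\delta'_z\|\approx(1-|z|^2)^{-(\alpha+2)/p}$, the weight $(1-|z|^2)^{-(\alpha+2)/p}$ is exactly the factor that must appear; note that, unlike the boundedness theorems, no relation between $p$ and $q$ is needed here because only the pointwise estimate of $\delta'_z$ enters.

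For the sufficiency direction I would assume the integral is finite and simply take
$$h(z)=\frac{|g(z)|}{(1-|z|^2)^{(\alpha+2)/p}}\,.$$
Then $h\in L^q(A_{\beta})$ is exactly the hypothesis, since $\int_{\mathbb{D}}h(z)^q\,dA_{\beta}=\int_{\mathbb{D}}\frac{|g(z)|^q}{(1-|z|^2)^{q(\alpha+2)/p}}\,dA_{\beta}$, and for any $f$ in the unit ball of $D_{\alpha}^p$, Lemma~\ref{le2} gives $|f'(z)g(z)|\leq\|\delta'_z\|\,|g(z)|\lesssim h(z)$ a.e.\ $[A_{\beta}]$, the implicit constant being harmless as it is absorbed into $h$. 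Hence $S_g$ is order bounded.

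For the necessity direction, starting from a dominating $h\in L^q(A_{\beta})$, I would pass to the supremum over the unit ball of $D_{\alpha}^p$ to obtain
$$h(z)\geq|g(z)|\,\|\delta'_z\|\gtrsim\frac{|g(z)|}{(1-|z|^2)^{(\alpha+2)/p}}\,,\quad\text{a.e. }[A_{\beta}]\,,$$
and then raise to the $q$-th power and integrate against $dA_{\beta}$ to recover the stated integral. The one genuinely delicate point — and the step I expect to be the main obstacle — is the passage to the supremum, since a priori the $A_{\beta}$-null set on which the pointwise domination fails depends on the chosen $f$. I would handle this by fixing a countable set $\{f_n\}$ dense in the unit ball of $D_{\alpha}^p$ (which exists by separability); discarding the countable union of the corresponding null sets leaves a full-measure set on which $|f_n'(z)g(z)|\leq h(z)$ holds simultaneously for every $n$. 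Since for each fixed $z$ the functional $f\mapsto f'(z)$ is bounded on $D_{\alpha}^p$ by Lemma~\ref{le2}, it is continuous, so its supremum over the unit ball equals its supremum over the dense set $\{f_n\}$; this yields $\|\delta'_z\|\,|g(z)|\leq h(z)$ for a.e.\ $z$ and completes the argument. This density refinement is the rigorous mechanism implicit in the proof of Theorem~\ref{th9} as well, so in the write-up I would present it once here and simply note the parallel.
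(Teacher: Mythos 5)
Your proposal is correct and is essentially the paper's own proof: the paper establishes Theorem~\ref{th10} by repeating the argument of Theorem~\ref{th9} verbatim with the derivative evaluation functional $\delta'_z$ of Lemma~\ref{le2} in place of $\delta_z$ of Lemma~\ref{le1}, which is exactly what you do in both directions. Your countable-density argument for handling the fact that the exceptional $A_\beta$-null set a priori depends on $f$ is a legitimate rigor refinement of a point the paper leaves implicit, but it does not constitute a different approach.
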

\begin{proof}
The proof is similar to that of Theorem~\ref{th9} except that in this case, we resort to Lemma~\ref{le2} instead of Lemma~\ref{le1}.
\end{proof}

\begin{theorem}\label{th11}
Let $-1<\alpha,\beta$, $g\in H(\mathbb D)$ and $0<p,q<\infty$\,. Then the following statements hold:\\
\phantom{(1)}(1) If $p<\alpha+2$, then $M_g:D_{\alpha}^p\rightarrow D_{\beta}^q$ is order bounded if and only if
$$\int_{\mathbb{D}}\frac{|g(z)|^q}{(1-|z|^2)^{q(\alpha+2)/p}}dA_{\beta}<\infty\,;$$
\phantom{(1)}(2) If $p=\alpha+2$, then $M_g:D_{\alpha}^p\rightarrow D_{\beta}^q$ is order bounded if and only if $$\int_{\mathbb{D}}\frac{|g(z)|^q}{(1-|z|^2)^{q}}dA_{\beta}+\int_{\mathbb{D}}\frac{|g'(z)|^q}{\left(\log(\frac{2}{1-|z|^2})\right)^{q(1-p)/p}}dA_{\beta}<\infty\,;$$
\phantom{(1)}(3) If $p>\alpha+2$, then $M_g:D_{\alpha}^p\rightarrow D_{\beta}^q$ is order bounded if and only if $g\in D_{\beta}^q$ and
$$\int_{\mathbb{D}}\frac{|g(z)|^q}{(1-|z|^2)^{q(\alpha+2)/p}}dA_{\beta}<\infty\,.$$
\end{theorem}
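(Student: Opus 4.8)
The plan is to exploit the identity $(M_gf)(z)=f(0)g(0)+(T_gf)(z)+(S_gf)(z)$, whose derivative is $(M_gf)'(z)=f(z)g'(z)+f'(z)g(z)$, so that order boundedness of $M_g$ reduces to that of $T_g$ and $S_g$ in Theorem~\ref{th9} and Theorem~\ref{th10}. By Lemma~\ref{le1} and Lemma~\ref{le2} the pointwise bounds $|f(z)|\le\|\delta_z\|\,\|f\|_{D_\alpha^p}$ and $|f'(z)|\le\|\delta'_z\|\,\|f\|_{D_\alpha^p}$ suggest the dominating function
\begin{equation}\nonumber
h(z):=|g'(z)|\,\|\delta_z\|+|g(z)|\,\|\delta'_z\|,
\end{equation}
whose first summand is the dominant appearing for $T_g$ and whose second is that for $S_g$. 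Thus order boundedness of $M_g$ should be equivalent to $h\in L^q(A_\beta)$, i.e.\ to the simultaneous finiteness of $\int_{\mathbb D}|g'|^q\|\delta_z\|^q\,dA_\beta$ and $\int_{\mathbb D}|g|^q\|\delta'_z\|^q\,dA_\beta$; inserting the norms from the lemmas reproduces the displayed integrals in all three cases.

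For the sufficiency I would use that a finite sum of order bounded operators is order bounded: if $h_1,h_2\in L^q(A_\beta)$ dominate $|(T_gf)'|$ and $|(S_gf)'|$ over the unit ball of $D_\alpha^p$, then $h_1+h_2$ dominates $|(M_gf)'|$ and lies in $L^q(A_\beta)$. In cases (2) and (3) the two displayed hypotheses are exactly the assumptions of Theorem~\ref{th10} and of the corresponding part of Theorem~\ref{th9}, so both $T_g$ and $S_g$ are order bounded. Case (1) is the only one stating a single condition; here the key is the Bergman absorption estimate, valid for every real $s$,
\begin{equation}\nonumber
\int_{\mathbb D}|g|^q(1-|z|^2)^s\,dA<\infty\ \Longrightarrow\ \int_{\mathbb D}|g'|^q(1-|z|^2)^{s+q}\,dA<\infty,
\end{equation}
which follows from a Cauchy estimate $|g'(z)|^q\lesssim(1-|z|)^{-q-2}\int_{D(z,r_z)}|g|^q\,dA$ with $r_z\approx1-|z|$ and a Fubini argument. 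Since the $S_g$-exponent $s=\beta-q(\alpha+2)/p$ and the $T_g$-exponent $\beta-q(\alpha+2-p)/p$ differ by exactly $q$, this shows that in case (1) the stated condition on $g$ already forces the one on $g'$, so again both $T_g$ and $S_g$ are order bounded.

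For the necessity I would take a dominating $h\in L^q(A_\beta)$ with $|(M_gf)'(z)|\le h(z)$ a.e.\ for each $f$ in the unit ball and, using separability of $D_\alpha^p$ to reduce to a countable dense set, obtain $h(z)\ge\sup_{\|f\|_{D_\alpha^p}\le1}|f(z)g'(z)+f'(z)g(z)|$ a.e. To recover the condition on $g$ I feed in the functions $F_a$ from the proof of Theorem~\ref{th8}, which satisfy $F_a(a)=0$, $\|F_a\|_{D_\alpha^p}\lesssim1$ and $|F_a'(a)|\approx\|\delta'_a\|$; then $(M_gF_a)'(a)=F_a'(a)g(a)$ yields $h(a)\gtrsim\|\delta'_a\|\,|g(a)|$, and integrating gives $\int_{\mathbb D}|g|^q\|\delta'_z\|^q\,dA_\beta<\infty$. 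In case (1) this completes the proof. In cases (2) and (3) I must also recover the condition on $g'$, which calls for a test function that is \emph{flat} at $a$: some $\varphi_a$ with $\|\varphi_a\|_{D_\alpha^p}\lesssim1$, $\varphi_a'(a)=0$ and $|\varphi_a(a)|\approx\|\delta_a\|$, for then $(M_g\varphi_a)'(a)=\varphi_a(a)g'(a)$ gives $h(a)\gtrsim\|\delta_a\|\,|g'(a)|$ and hence $\int_{\mathbb D}|g'|^q\|\delta_z\|^q\,dA_\beta<\infty$. In case (3) this is immediate since $\|\delta_z\|\approx1$ and one may take $\varphi_a\equiv1$, giving $h\ge|g'|$ and therefore $g\in D_\beta^q$.

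The hard part is the flat test function $\varphi_a$ at the logarithmic endpoint (2), where $p=\alpha+2$ and $\|\delta_a\|\approx(\log\frac{2}{1-|a|^2})^{(p-1)/p}\to\infty$, so a constant will not suffice. My plan is to start from $\psi_a(z)=(\log\frac{2}{1-\bar{a}z})^{(p-1)/p}$, for which a standard weighted integral estimate gives $\|\psi_a\|_{D_\alpha^p}\lesssim1$ and $\psi_a(a)\approx\|\delta_a\|$, and then subtract a uniformly bounded correction whose derivative at $a$ matches $\psi_a'(a)$ but whose value at $a$ is smaller by a factor $\log\frac{2}{1-|a|^2}$; a correction of the form $(1-|a|^2)(\log\frac{2}{1-\bar{a}z})^{(p-1)/p-1}/(1-\bar{a}z)$ annihilates $\varphi_a'(a)$ while leaving $|\varphi_a(a)|\approx\|\delta_a\|$. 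Verifying the uniform norm bounds for $\psi_a$ and this correction, together with checking the absorption estimate at the (possibly $\le-1$) exponents that arise, are the only genuinely technical points; the rest is a direct assembly of Theorem~\ref{th9}, Theorem~\ref{th10}, Lemma~\ref{le1} and Lemma~\ref{le2}.
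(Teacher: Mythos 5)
Your overall skeleton — sufficiency by dominating $|(M_gf)'|\le |f||g'|+|f'||g|$ through Lemma~\ref{le1} and Lemma~\ref{le2}, with the Cauchy--Fubini absorption estimate showing that in case (1) the condition on $g$ already forces the one on $g'$; necessity by test functions vanishing at $a$ — is exactly the paper's (the paper writes out case (1) and declares (2), (3) "similar with minor modifications"). The absorption estimate and its proof sketch are correct, including for exponents $\le -1$. But two of your test-function claims fail as stated. First, in case (1) you cannot borrow $F_a$ from Theorem~\ref{th8}: there $F_a(z)=\frac{(1-|a|^2)^{(\alpha+2)/p}}{(1-\bar a z)^{2(\alpha+2)/p-1}}-(1-|a|^2)^{(p-\alpha-2)/p}$, and since $p<\alpha+2$ makes $(p-\alpha-2)/p<0$, the constant term blows up, so $|F_a(0)|\to\infty$ as $|a|\to 1^-$; because $\|\cdot\|_{D_\alpha^p}$ contains the term $|f(0)|^p$, the family $F_a$ is \emph{not} uniformly bounded in $D_\alpha^p$. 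This is precisely why the paper's proof of this theorem uses the corrected function $f_z(\omega)=\frac{(1-|z|^2)^{(\alpha+2)/p}}{(1-\bar z\omega)^{2(\alpha+2)/p-1}}-\frac{(1-|z|^2)^{(\alpha+2)/p+1}}{(1-\bar z\omega)^{2(\alpha+2)/p}}$, which still vanishes at $z$, has $|f_z'(z)|\approx\|\delta_z'\|$ for $|z|>1/2$, but stays uniformly bounded in norm (small $|z|$ being handled with the test functions $1$ and $z$).

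Second, and more seriously, your flat test function at the endpoint $p=\alpha+2$ does not exist as you build it. For $\psi_a(z)=\bigl(\log\frac{2}{1-\bar a z}\bigr)^{(p-1)/p}$ one has $|\psi_a'(z)|^p\approx |1-\bar a z|^{-p}\bigl|\log\frac{2}{1-\bar a z}\bigr|^{-1}$, and the resulting weighted integral is the borderline case: $\int_{\mathbb D}\frac{(1-|z|^2)^{p-2}}{|1-\bar a z|^{p}\log\frac{2}{|1-\bar a z|}}\,dA(z)\approx\log\log\frac{2}{1-|a|^2}$, so $\|\psi_a\|_{D_\alpha^p}\to\infty$; the claimed "standard estimate $\|\psi_a\|_{D_\alpha^p}\lesssim 1$" is false, and renormalizing by $(\log\log)^{1/p}$ degrades $|\varphi_a(a)|$ below $\|\delta_a\|$, yielding a strictly weaker condition on $g'$ than the theorem asserts. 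The function with uniformly bounded norm and value $\approx\|\delta_a\|$ at $a$ is the \emph{normalized} logarithm $f_a(z)=\frac{\log\frac{2}{1-\bar a z}}{(\log\frac{2}{1-|a|^2})^{1/p}}$ (the first summand of the paper's case-(2) test function), which you can flatten by subtracting $\frac{1-|a|^2}{1-\bar a z}\bigl(\log\frac{2}{1-|a|^2}\bigr)^{-1/p}$, whose derivative at $a$ matches $f_a'(a)$ exactly and whose norm and value at $a$ are harmless. Alternatively, flat functions can be avoided altogether: once the vanishing test function gives $\frac{|g(a)|}{1-|a|^2}\lesssim h(a)$, apply the domination to $f_a$ and use the triangle inequality, $\|\delta_a\|\,|g'(a)|\le h(a)+|f_a'(a)|\,|g(a)|\le h(a)+\frac{|g(a)|}{(1-|a|^2)(\log\frac{2}{1-|a|^2})^{1/p}}\lesssim h(a)$, which is presumably the "minor modification" the paper intends. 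Your case (3), taking $\varphi_a\equiv 1$ to get $h\ge |g'|$, is correct, modulo the caveat that the kernel-type test function degenerates at the single exponent $p=2(\alpha+2)$ and needs an adjusted power there.
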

\begin{proof}
(1) Suppose that $\int_{\mathbb{D}}\frac{|g(z)|^q}{(1-|z|^2)^{q(\alpha+2)/p}}dA_{\beta}<\infty\,.$
Let $f\in D_{\alpha}^p$ with $\|f\|_{D_{\alpha}^p}\leq1$, then by Lemma~\ref{le1} and Lemma~\ref{le2}, we have
$$|(f(z)g(z))'|\leq|f'(z)g(z)|+|f(z)g'(z)|\lesssim \frac{|g(z)|}{(1-|z|^2)^{\frac{(\alpha+2)}{p}}}+\frac{|g'(z)|}{(1-|z|^2)^{\frac{(\alpha+2-p)}{p}}}\,.$$
By taking
$$h(z)=\frac{|g(z)|}{(1-|z|^2)^{\frac{(\alpha+2)}{p}}}+\frac{|g'(z)|}{(1-|z|^2)^{\frac{(\alpha+2-p)}{p}}}\,,$$
then $h\in L^q(A_{\beta})$ since
$$\int_{\mathbb{D}}\frac{|g'(z)|}{(1-|z|^2)^{\frac{(\alpha+2-p)}{p}}}dA_{\beta}\lesssim \int_{\mathbb{D}}\frac{|g(z)|}{(1-|z|^2)^{\frac{(\alpha+2)}{p}}}dA_{\beta}<\infty\,.$$
Accordingly, $M_g:D_{\alpha}^p\rightarrow D_{\beta}^q$ is order bounded.

Conversely, assume that $M_g:D_{\alpha}^p\rightarrow D_{\beta}^q$ is order bounded. Then there exists $h\in L^q(A_{\beta})$ such that for all $f\in D_{\alpha}^p$ with $\|f\|_{D_{\alpha}^p}\leq 1$, it holds that
$$|(fg)'(z)|\leq h(z)\,, \quad \text{ a.e. } [A_{\beta}]\,.$$
For any $z\in\mathbb{D}$, we consider the function
$$f_z(\omega)=\frac{(1-|z|^2)^{(\alpha+2)/p}}{(1-\bar{z}\omega)^{2(\alpha+2)/p-1}}-\frac{(1-|z|^2)^{(\alpha+2)/p+1}}{(1-\bar{z}\omega)^{2(\alpha+2)/p}}\,,\quad \omega\in\mathbb{D}\,.$$
An easy calculation shows that
$\|f_z\|_{D_{\alpha}^p}\lesssim1$ and
$$f'_z(\omega)=\bar{z}\left(\frac{2(\alpha+2)-p}{p}\frac{(1-|z|^2)^{(\alpha+2)/p}}{(1-\bar{z}\omega)^{2(\alpha+2)/p}}-\frac{2(\alpha+2)}{p}\frac{(1-|z|^2)^{(\alpha+2)/p+1}}{(1-\bar{z}\omega)^{2(\alpha+2)/p+1}}\right),\, \omega\in\mathbb{D}.$$
Thus, we have $f_z(z)=0$ and $f'_z(z)=\frac{-\bar{z}}{(1-|z|^2)^{(\alpha+2)/p}}$\,.
Therefore,
$$\frac{|\bar{z}g(z)|}{(1-|z|^2)^{(\alpha+2)/p}}=|g'(z)f_z(z)+g(z)f'_z(z)|=|(g f_z)'(z)|\lesssim h(z)\,, \quad \text{ a.e. } [A_{\beta}]\,.$$
Hence, for $|z|>1/2$, it holds that
$$\frac{|g(z)|}{(1-|z|^2)^{(\alpha+2)/p}}\lesssim h(z)\,, \quad \text{ a.e. } [A_{\beta}]\,.$$

For $|z|\leq1/2$, it follows from the continuity of the function $\frac{1}{(1-|z|^2)^{(\alpha+2)/p}}$ that
$$\frac{1}{(1-|z|^2)^{(\alpha+2)/p}}\lesssim1\,.$$

Now, by taking the constant function $1$ and the monomial $z$ as the test function in $D_{\alpha}^p$, we get that $|g'(z)|\lesssim h(z)\text{ a.e. } [A_{\beta}]$, and $|g'(z)z+g(z)|\lesssim h(z)\text{ a.e. } [A_{\beta}]\,.$
Thus, for $|z|\leq1/2$, it also holds that
$$\frac{|g(z)|}{(1-|z|^2)^{(\alpha+2)/p}}\lesssim h(z)\,, \quad \text{ a.e. } [A_{\beta}]\,.$$

In conclusion, for all $z\in\mathbb{D}$,
$$\frac{|g(z)|}{(1-|z|^2)^{(\alpha+2)/p}}\lesssim h(z)\,, \quad \text{ a.e. } [A_{\beta}]\,,$$
which implies that
$$\int_{\mathbb{D}}\frac{|g(z)|^q}{(1-|z|^2)^{q(\alpha+2)/p}}dA_{\beta}<\infty\,.$$

The proof of (2) and (3) are similar to that of (1) by some minor modifications. For example, in (2), we take the test function
$$f_z(\omega)=\frac{\log(\frac{2}{1-\bar{z}\omega})}{\log(\frac{2}{1-|z|^2})^{1/p}}-\frac{\left(\log(\frac{2}{1-\bar{z}\omega})\right)^2}{\log(\frac{2}{1-|z|^2})^{1/p+1}}\,,\quad \omega\in\mathbb{D}\,.$$
Thus the proof is complete.
\end{proof}

By Theorem~\ref{th9}, Theorem~\ref{th10} and Theorem~\ref{th11}, we obtain the following corollary.
\begin{corollary}\label{cor4}
Let $-1<\alpha,\beta$, $g\in H(\mathbb D)$ and $0<p<\alpha+2,0<q<\infty$\,. Then the following statements are equivalent:\\
\phantom{(1)}(1) $T_g:D_{\alpha}^p\rightarrow D_{\beta}^q$ is order bounded;\\
\phantom{(1)}(2) $S_g:D_{\alpha}^p\rightarrow D_{\beta}^q$ is order bounded;\\
\phantom{(1)}(3) $M_g:D_{\alpha}^p\rightarrow D_{\beta}^q$ is order bounded;\\
\phantom{(1)}(4) $\int_{\mathbb{D}}\frac{|g(z)|^q}{(1-|z|^2)^{q(\alpha+2)/p}}dA_{\beta}<\infty$, that is, $g\in A^q_{\beta-\frac{q(\alpha+2)}{p}}$\,.
\end{corollary}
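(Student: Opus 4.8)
The plan is to funnel all four statements through condition (4) using the order-boundedness criteria already established. Theorem~\ref{th10} asserts that $S_g$ is order bounded exactly when (4) holds, so (2)$\Leftrightarrow$(4) is immediate; and since $p<\alpha+2$, part~(1) of Theorem~\ref{th11} asserts that $M_g$ is order bounded exactly when (4) holds, so (3)$\Leftrightarrow$(4) comes for free. Hence the entire content of the corollary reduces to the single equivalence (1)$\Leftrightarrow$(4).

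For (1)$\Leftrightarrow$(4) I would first recast both sides as weighted Bergman conditions. Write $\gamma:=\beta-\tfrac{q(\alpha+2)}{p}$ and recall $dA_\beta(z)=(1-|z|^2)^\beta\,dA(z)$. Since $\beta-\tfrac{q(\alpha+2-p)}{p}=\gamma+q$, Theorem~\ref{th9}(1) identifies (1) with the condition $\int_{\mathbb D}|g'(z)|^q(1-|z|^2)^{\gamma+q}\,dA(z)<\infty$, that is $g'\in A^q_{\gamma+q}$, while (4) reads $g\in A^q_{\gamma}$. Thus (1)$\Leftrightarrow$(4) is precisely the classical comparability between an analytic function and its derivative in weighted Bergman spaces. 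The implication (4)$\Rightarrow$(1) is already available: combining the Cauchy estimate with the subharmonicity of $|g|^q$ over a pseudo-hyperbolic disk $\Delta(z,r)$ gives $|g'(z)|^q(1-|z|^2)^{q}\lesssim(1-|z|^2)^{-2}\int_{\Delta(z,r)}|g(w)|^q\,dA(w)$, and a Fubini argument (using $(1-|z|^2)\approx(1-|w|^2)$ on $\Delta(z,r)$) turns this into $\|g'\|^q_{A^q_{\gamma+q}}\lesssim\|g\|^q_{A^q_\gamma}$; this is essentially the estimate already used inside the proof of Theorem~\ref{th11}(1), and it holds for every real $\gamma$.

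The reverse implication (1)$\Rightarrow$(4) is the crux. Here one must reconstruct $g$ from $g'$ in the weighted norm, i.e. establish $\|g\|^q_{A^q_\gamma}\lesssim|g(0)|^q+\|g'\|^q_{A^q_{\gamma+q}}$, which I would deduce from the standard norm equivalence $\|g\|^q_{A^q_\gamma}\asymp|g(0)|^q+\|g'\|^q_{A^q_{\gamma+q}}$ for weighted Bergman spaces (see \cite{zhu,HKZ}). The subtle point, and the step I would guard most carefully, is the range of the exponent $\gamma$: since $g'$ determines $g$ only up to an additive constant, this equivalence is transparent when $\gamma>-1$ (so that constants belong to $A^q_\gamma$), whereas under the bare hypotheses $\gamma=\beta-\tfrac{q(\alpha+2)}{p}$ need not exceed $-1$. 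I would therefore invoke the norm equivalence in the form appropriate to the relevant exponent, treating the constant of integration explicitly where necessary; this is the one genuinely non-formal ingredient, the remaining implications following directly from Theorems~\ref{th9}, \ref{th10} and~\ref{th11}.
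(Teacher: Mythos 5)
Your reduction of the corollary to the single equivalence (1)$\Leftrightarrow$(4) is exactly what the paper does (its entire proof is a citation of Theorems~\ref{th9}, \ref{th10} and~\ref{th11}), and your direction (4)$\Rightarrow$(1) --- the subharmonicity/Fubini estimate $\|g'\|^q_{A^q_{\gamma+q}}\lesssim\|g\|^q_{A^q_{\gamma}}$, valid for every real $\gamma$ --- is correct; it is the same estimate the paper uses inside its proof of Theorem~\ref{th11}(1).

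The gap is the step you flagged, (1)$\Rightarrow$(4), and it cannot be repaired by ``treating the constant of integration explicitly'': when $\gamma=\beta-\frac{q(\alpha+2)}{p}\le-1$ the implication is false, not merely delicate. Take $g\equiv1$. Then $g'\equiv0$, so $T_g=0$ is order bounded and (1) holds (also directly from Theorem~\ref{th9}(1)); but (4) asks that $\int_{\mathbb D}(1-|z|^2)^{\gamma}\,dA(z)<\infty$, which fails for $\gamma\le-1$. Admissible parameters realizing this are, e.g., $\alpha=\beta=0$, $p=1$, $q=2$ (so $\gamma=-4$), which satisfy every hypothesis of the corollary. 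The structural reason is the one you sensed: since the integral means $\frac{1}{2\pi}\int_0^{2\pi}|g(re^{i\theta})|^q\,d\theta$ are nondecreasing in $r$, for $\gamma\le-1$ the space $A^q_\gamma$ contains only the zero function; hence each of (2), (3), (4) is equivalent to $g\equiv0$, while (1) is satisfied by every constant. The constant of integration is precisely the obstruction, and no version of the norm equivalence removes it. The honest conclusion is that the corollary (and the paper's one-line proof) silently assumes $\beta-\frac{q(\alpha+2)}{p}>-1$; under that extra hypothesis the classical equivalence $\|g\|^q_{A^q_\gamma}\asymp|g(0)|^q+\|g'\|^q_{A^q_{\gamma+q}}$ that you cite applies verbatim and your argument closes, whereas without it statement (1) must be dropped from the list of equivalences. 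So your proposal, as written, leaves its crucial step unproved --- and in fact unprovable --- in part of the stated parameter range; what you have actually uncovered is a missing hypothesis in the statement itself.
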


\end{document}